\newtheorem{theorem}{Theorem}[section]
\newtheorem{lemma}[theorem]{Lemma}
\newtheorem{proposition}[theorem]{Proposition}
\newtheorem{corollary}[theorem]{Corollary}
\newtheorem{notation}[theorem]{Notation}
\newtheorem{fact}[theorem]{Fact}
\newtheorem*{claim}{Claim}
\theoremstyle{definition}
\newtheorem{definition}[theorem]{Definition}
\theoremstyle{remark}
\newtheorem{remark}[theorem]{Remark}
\newcommand{\bl}{\begin{lemma}}
\newcommand{\el}{\end{lemma}}
\newcommand{\bfa}{\begin{fact}}
\newcommand{\efa}{\end{fact}}
\newcommand{\bpr}{\begin{proposition}}
\newcommand{\epr}{\end{proposition}}
\newcommand{\bp}{\begin{proof}}
\newcommand{\ep}{\end{proof}}
\newcommand{\bd}{\begin{definition}}
\newcommand{\ed}{\end{definition}}
\newcommand{\bt}{\begin{theorem}}
\newcommand{\et}{\end{theorem}}
\newcommand{\bc}{\begin{corollary}}
\newcommand{\ec}{\end{corollary}}
\newcommand{\bn}{\begin{notation}}
\newcommand{\en}{\end{notation}}
\newcommand{\br}{\begin{remark}}
\newcommand{\er}{\end{remark}}
\newcommand{\bcl}{\begin{claim}}
\newcommand{\ecl}{\end{claim}}
\newcommand{\N}{{\mathbb{N}}}
\newcommand{\R}{{\mathbb{R}}}
\newcommand{\abs}[1][\cdot]{\lvert#1\rvert}
\newcommand{\nrm}[1]{\|#1\|}
\newcommand{\al}{\alpha}
\newcommand{\e}{\varepsilon}
\newcommand{\de}{\delta}
\newcommand{\bnum}{\begin{enumerate}}
\newcommand{\enum}{\end{enumerate}}
\newcommand{\mc}{\mathcal}
\newcommand{\mt}{\mc{T}}
\newcommand{\fa}{f_{\alpha}}
\newcommand{\fb}{f_{\beta}}
\numberwithin{subsection}{section}
\numberwithin{equation}{section}
\newcommand{\norm}[1][\cdot]{\lVert #1\rVert}
\newcommand{\xn}[1][x]{(#1_{n})_{n\in\N}}
\DeclareMathOperator{\supp}{supp}
\DeclareMathOperator{\maxsupp}{maxsupp}
\DeclareMathOperator{\minsupp}{minsupp}
\DeclareMathOperator{\ran}{range}
\DeclareMathOperator{\suc}{succ}
\DeclareMathOperator{\ord}{ord}
\begin{document}
\title{Isomorphisms and strictly singular operators \\ in mixed Tsirelson spaces}
\author{Denka Kutzarova} 
\address{Institute of Mathematics, Bulgarian Academy of Sciences, current address: Department of Mathematics, University of Illinois at
Urbana-Champaign}
\email{denka@math.uiuc.edu}
\author{Antonis Manoussakis}
\address{Department of Sciences, Technical University of Crete, 
73100 Chania (Crete), Greece} 
\email{amanousakis@isc.tuc.gr}
\author{Anna  Pelczar-Barwacz} 
\address{Institute of Mathematics, Jagiellonian University, {\L}ojasiewicza 6, 30-348 Krak\'ow, Poland}
\email{anna.pelczar@im.uj.edu.pl}
\thanks{The research of the third author was supported by the Polish Ministry of Science and Higher Education grant N N201 421739}
\begin{abstract}
We study the family of isomorphisms and strictly singular operators in mixed Tsirelson spaces and their modified versions setting. We show sequential minimality of modified mixed Tsirelson spaces $T_M[(\mc{S}_n,\theta_n)]$ satisfying some regularity conditions and present results on existence of strictly singular non-compact operators on subspaces of mixed Tsirelson spaces defined by the families $(\mc{A}_n)_n$ and $(\mc{S}_n)_n$. 
\end{abstract}
\keywords{quasiminimality, strictly singular operator, mixed Tsirelson space }
\maketitle
\section*{Introduction}
In the celebrated paper \cite{G} W.T. Gowers started his classification program for  Banach spaces. The goal is to identify classes of Banach spaces which are 
\bnum
 \item hereditary, i.e. if a space belongs to a given class, then all of its closed infinite dimensional subspaces as well,  
 \item inevitable, i.e. any Banach space contains an infinite dimensional subspace in one of those classes, 
 \item defined in terms of richness of family of bounded operators in the space. 
\enum
The famous Gowers' dichotomy brought first two classes: spaces with unconditional basis and hereditary indecomposable spaces. The further classification, described in terms of isomorphisms, concerned minimality and strict quasiminimality. A Banach space $X$ is \textit{ minimal} if every closed infinite dimensional subspace of  $X$ contains a further subspace  isomorphic to $X$.  A  Banach space $X$ is called \textit{quasiminimal} if any two infinite dimensional subspaces $Y,Z$ of $X$ contain further isomorphic subspaces.  The classical spaces $\ell_{p}$, $1\leq p<\infty$, $c_{0}$ are minimal and the Tsirelson space $T[\mc{S}_1,1/2]$ is the first known strictly quasiminimal space (i.e. without minimal subspaces), \cite{CO}.  The results of W.T. Gowers lead to the question of the refinement of the classes and classification of already  known  Banach space. Further step in the first direction was made by the third named author, \cite{p}, who proved that a strictly quasiminimal  Banach space contains a subspace with no subsymmetric sequence. An extensive refinement of list of the classes and study of exampes were made recently by V. Ferenczi and C. Rosendal \cite{fr, fr2}.

The mixed Tsirelson spaces $T[(\mathcal{M}_{n},\theta_{n})_n]$, for $\mc{M}_n=\mc{A}_n$ or $\mc{S}_n$, as the basic examples of spaces not containing $\ell_p$ or $c_0$, form a natural class to be studied with respect to the classification program. The first step was made by  T. Schlum\-precht, \cite{as1}, who proved that his famous space $S=T[(\mathcal{A}_{n},1/\log_{2}(n+1))_{n}]$  is complementably minimal. The result  of  Schlumprecht holds for a certain class of mixed Tsirelson spaces $T[(\mathcal{A}_{k_n},\theta_n)_{n}]$ by \cite{m}. On the other hand, the  Tzafriri's  space $T[(\mathcal{A}_{n},c/\sqrt{n})_{n}]$ \cite{t} is not minimal by \cite{jko}. However the original Tsirelson space $T[\mc{S}_1,1/2]$ is not minimal \cite{CO}, every its normalized block sequence is equivalent to a subsequence of the basis. We show that mixed Tsirelson spaces $T[(\mathcal{A}_{n},\theta_n)_{n}]$, for which Tzafriri space is a prototype, are saturated with subspaces with this "blocking principle''.

V. Ferenczi and C. Rosendal \cite{fr} introduced and studied a stronger notion of quasiminimality. A Banach space $X$ with a basis is \textit{sequentially minimal} \cite{fr}, if any block subspace of $X$ contains a block sequence $(x_n)$ such that every block subspace of $X$ contains a copy of a subsequence of $(x_n)$. The related notions  in mixed Tsirelson spaces defined by families $(\mc{S}_n)$ and their relation to existence of $\ell_1^\omega$-spreading models were studied in \cite{lt,klmt}. In \cite{mp} it was shown that the spaces $T[(\mathcal{A}_{n},\theta_n)_{n}]$, as well as $T[(\mathcal{S}_{n},\theta_n)_n]$ satisfying the regularity condition $\theta_n/\theta^n\searrow$, where $\theta=\lim_n\theta_n^{1/n}$, are sequentially minimal. We show that the  modified mixed Tsirelson spaces  $T_{M}[(\mathcal{S}_{n},\theta_{n})_{n}]$ with the above property are also sequentially minimal. 

The major tool  in the study of mixed Tsirelson spaces $T[(\mc{S}_n,\theta_n)_n]$ are the tree-analysis of norming functionals and the special averages introduced in \cite{ad2}, see also \cite{ato}. The basic idea to prove quasiminimality is to produce in every subspace a sequence of appropriate special averages of rapidly increasing lengths and show these sequences span isomorphic subspaces. 
The major obstacle in study of modified mixed Tsirelson spaces is estimating the norms of splitting a vector into pairwise disjoint parts instead of consecutive parts as in non-modified setting. In order to overcome it, we introduced special types of averages, so-called Tsirelson averages, describing in fact local representation of the Tsirelson space $T[\mc{S}_1,\theta]$, with $\theta=\sup_n\theta_n^{1/n}$, in the considered space. Then we are able to control the action of a norming functional on a linear combination of Tsirelson averages by the action of a norming functional on suitable averages in the Tsirelson space $T[\mc{S}_1,\theta]$ and vice versa. Using those estimations we prove the sequential minimality of modified mixed Tsirelson space satisfying the regularity condition. Tsirelson averages are  also  the main tool for proving arbitrary distortability of $T_M[(\mc{S}_n,\theta_n)]$ in case $\theta_n/\theta^n\searrow 0$, the result known before in non-modified setting under the condition $\theta_n/\theta^n\to 0$, \cite{ao}.

In the second part of the paper we deal with the existence of strictly singular  non-compact operators in mixed Tsirelson spaces.  The existence of non-trivial strictly singular operators, i.e. operators whose none restriction to an infinite dimensional subspace is an isomorphism, was also studied in context of classification program of Banach space, both in search for sufficient conditions and examples on known spaces.  A space on which all the bounded operators are compact perturbations of  multiple of the identity was constructed recently by S.A. Argyros and R. Haydon, \cite{ah}, who solved "scalar-plus-compact". The existence of strictly singular non-compact operators was shown on Gowers-Maurey spaces and Schlumprecht space \cite{as2}, as well as on a class of spaces defined by families $(\mc{S}_n)_n$ \cite{gas2}. Th. Schlumprecht \cite{s2} studying the richness of the family of operators on a Banach space in connection with the "scalar-plus-compact" problem defined two classes of Banach spaces. Class 1 refers to a variation of a "blocking principle'', while Class 2 means existence of a striclty singular non-compact operator in any subspace (see Def. \ref{s2}). T. Schlumprecht asked if any Banach space contains a subspace with a basis which is either of Class 1 or Class 2. We show that a mixed Tsirelson space  $T[(\mathcal{A}_{n}, \frac{c_{n}}{n^{1/q}})_{n}]$  belongs to Class 1  if   $\inf_{n}c_{n}>0$   and   to Class  2  if   $\lim_{n}c_{n}=0$.

In \cite{kl}  a block sequence $(x_{n})_{n\in\mathbb{N}}$ generating  $\ell_{1}$-spreading model was constructed in Schlum\-precht space $S$. This result combined with the result  of   I. Gasparis  \cite{gas2} led to the question if some biorthogonal sequence to $(x_{n})_{n}$ generates a $c_{0}$-spreading model in $S^{*}$.  We remark that this is not the case. In general, it is still unknown if any sequence in $S^*$ generates a $c_0$-spreading model. Finally we show that in mixed (modified) Tsirelson spaces defined by $(\mc{S}_n)$  containing a block sequence generating  $\ell_{1}^{\omega}$-spreading model there is a strictly singular non-compact operator on a subspace.

We describe now briefly the content of the paper. In the first section we recall the basic notions in the theory of mixed Tsirelon spaces and their modified versions, including the canonical representation of these spaces and the notion of a tree-analysis of  a norming functional (Def. \ref{def-tree}). The second section is devoted to the study of modified mixed Tsirelson spaces $T[(\mc{S}_n,\theta_n)_n]$ satisfying the regularity condition. We extend the notion of an averaging tree (Def. \ref{aat0}) and present the notions of averages of different types, providing also upper (Lemma \ref{theta1.7}) and lower (Lemma \ref{theta3}) "Tsirelon-type" estimates.  We conclude the section with the result on arbitrary distortion for spaces with $\theta_n/\theta^n\searrow 0$ (Theorem \ref{dist}) and sequential minimality (Theorem \ref{quasi}). In the last section we study the existence of non-compact strictly singular operators in mixed Tsirelson spaces $T[(\mc{A}_n,\theta_n)_n]$ (Theorem \ref{p-sp}). We discuss the behaviour of a biorthogonal sequence to the sequence generating $\ell_1$-spreading model in Schlumprecht space (Proposition \ref{c-0}) and the case of mixed Tsirelson spaces $T[(\mc{S}_n,\theta_n)_n]$ admitting $\ell_1^\omega$-spreading model (Theorem \ref{thm3.7}). We finish with the comments and questions concerning the Tzafriri space and richness of the set of subsymmetric sequences in a Banach space. 

\section{Preliminaries}
We recall  the basic definitions and standard notation.

By a {\em  tree}  we shall mean a non-empty partially ordered  set $(\mt, \preceq)$ for which the set $\{ y \in \mt:y \preceq x \}$ is linearly ordered and finite for each $x \in \mt$. If $\mt' \subseteq \mt$ then we say that $(\mt', \preceq)$ is a {\em subtree}  of $(\mt,\preceq)$. The tree $\mt$ is called {\em finite}  if the set $\mt$ is finite. The {\em initial} nodes of $\mt$ are the minimal elements of $\mt$ and the {\em terminal}  nodes are the maximal elements. A {\em branch}  in $\mt$ is a maximal linearly ordered set in $\mt$. The {\em immediate successors}  of $x \in \mt$, denoted by $\succ (x)$, are all  the nodes $y \in \mt$ such that $x \preceq y$ but there is no $z \in \mt$ with $x \preceq z \preceq y$. If $X$ is a linear space, then a {\em tree in $X$}  is a tree whose nodes are vectors in $X$.

Let $X$ be a Banach space with a basis $(e_i)$. The \textit{support} of a vector $x=\sum_{i} x_i e_i$ is the set $\supp x =\{ i\in \N : x_i\neq 0\}$, the \textit{range} of $x$, denoted by $\ran (x)$ is the minimal interval containing $\supp x$. Given any $x=\sum_{i} a_ie_i$ and finite $E\subset\N$ put $Ex=x_E=\sum_{i\in E}a_ie_i$. We write $x<y$ for vectors $x,y\in X$, if $\max\supp x<\min \supp y$. A \textit{block sequence} is any sequence $(x_i)\subset X$ satisfying $x_{1}<x_{2}<\dots$, a \textit{block subspace} of $X$ - any closed subspace spanned by an infinite block sequence. A subspace spanned by a block sequence $(x_n)$ we denote by $[x_n]$. 
\bn 
Given any two vectors $x,y\in X$ we write $x\preceq y$, if $\supp x\subset\supp y$, and we say that $x$ and $y$ are \textit{incomparable}, if $\supp x\cap \supp y=\emptyset$. 

Given a block sequence $(x_n)\subset X$ and a functional $f\in X^*$ we say that $f$ begins in $x_{n}$, if $\minsupp f\in (\maxsupp x_{n-1},\maxsupp x_n]$ (set $x_0=0$).
\en
A basic sequence $(x_n)$ $C-$\textit{dominates} a basic sequence $(y_n)$, $C\geq 1$, if for any scalars $(a_n)$ we have
$$
\nrm{\sum_{n}a_ny_n}\leq C\nrm{\sum_{n}a_nx_n}\,.
$$
Two basic sequences $(x_n)$ and $(y_n)$ are $C$-\textit{equivalent}, $C\geq 1$, if $(x_n)$ $C-$dominates $(y_n)$ and $(y_n)$ $C-$dominates $(x_n)$.
\bd
Let $E$ be a Banach space with a 1-subsymmetric basis $(u_n)$, i.e. 1-equivalent to any of its infinite subsequences. Let $(x_n)$ be a seminormalized basic sequence in a Banach space $X$. We say that $(x_n)_n$ generates $(u_n)$ as a spreading model, if for any $k\in\N$ and any $(a_i)_{i=1}^k\subset\R$ we  have
$$
\lim_{n_1\to\infty}\lim_{n_2\to\infty}\dots\lim_{n_k\to\infty}\norm[\sum_{i=1}^ka_ix_{n_i}]_X=\norm[\sum_{i=1}^ka_iu_i]_E\,.
$$
We say that a Banach space $X$ with a basis is $\ell_p$-asymptotic, $1\leq p\leq\infty$, if any block sequence $(x_i)_{i=1}^n$ is $C$-equivalent to the u.v.b. of $\ell_p^n$, for some universal $C\geq 1$. 
\ed
By \cite{bs} any seminormalized basic sequence admits a subsequence generating spreading model. We say that $(x_n)$ generates $\ell_p$- (resp. $c_0$-)spreading model, if $(u_n)$ is equivalent to the u.v.b. of $\ell_1$ (resp. $c_0$). 

Recall that by Krivine theorem for any Banach space $X$ with a basis there is some $1\leq p\leq \infty$ such that $\ell_p$ is finitely block (almost isometrically) represented in $X$, i.e. for any $\e>0$ and any $n\in\N$ there is a normalized block sequence $x_1<\dots<x_n$ in $X$ which is $(1+\e)$-equivalent to the u.v.b. of $\ell_p^n$. 

\

We work on two types of families of finite subsets of $\N$: $(\mc{A}_n)_{n\in\N}$ and $(\mc{S}_\al)_{\al<\omega_1}$. Let
$$
\mc{A}_n=\{F\subset\N:\# F\leq n\}, \ \ n\in\N\,.
$$
\textit{Schreier families} $(\mc{S}_\al)_{\al<\omega_1}$, introduced in \cite{aa}, are defined by induction:
\begin{align*}
\mc{S}_0 &=\{\{ k\}:\ k\in\N\}\cup\{\emptyset\}, \\
\mc{S}_{\al+1}&  =\{F_1\cup\dots\cup F_k:\ k\leq F_1<\dots<F_k, \
f_1,\dots, F_k\in \mc{S}_\al\}, \ \ \al<\omega_1\,.
\end{align*}
If $\al$ is a limit ordinal, choose $\al_n\nearrow \al$ and set 
$$
\mc{S}_\al=\{F:\ F\in \mc{S}_{\al_n}\ \mathrm{and}\ n\leq F\ \mathrm{for\ some}\ n\in\N\}\,.
$$
Given a family $\mc{M}=\mc{A}_n$ or $\mc{S}_n$ we say that a sequence $E_1,\dots, E_k$ of subsets of $\N$ is
 \bnum
 \item $\mc{M}$-\textit{admissible}, if $E_1<\dots<E_k$ and $(\min E_i)_{i=1}^k\in\mc{M}$,
 \item $\mc{M}$-\textit{allowable}, if $(E_i)_{i=1}^k$ are pairwise disjoint and $(\min E_i)_{i=1}^k\in\mc{M}$.
 \enum
Let $X$ be a Banach space with a basis. We say that a sequence $x_1<\dots <x_n$ is $\mc{M}$-\textit{admissible} (resp. \textit{allowable}), if $(\supp x_i)_{i=1}^n$ is $\mc{M}$-admissible (resp. allowable).

\bd[Mixed and modified mixed Tsirelson space] Fix a sequence of families $(\mc{M}_n)=(\mc{A}_{k_n})$ or $(\mc{S}_{k_n})$ and sequence $(\theta_n)\subset (0,1)$ with $\lim_{n\to\infty}\theta_n=0$. Let $K\subset c_{00}$ be the smallest set satisfying the following:
\bnum 
 \item $(\pm e_n^*)_n\subset K$,
 \item for any $f_1<\dots<f_k$ in $K$, if $(f_i)_{i=1}^k$ is $\mc{M}_n$-admissible for some $n\in\N$, then $\theta_n(f_1+\dots+f_k)\in K$.
\enum 
We define a norm on $c_{00}$ by $\nrm{x}=\sup\{f(x):f\in K\}$, $x\in c_{00}$. The \textit{mixed Tsirelson space} $T[(\mc{M}_n,\theta_n)_n]$ is the completion of $(c_{00}, \nrm{\cdot})$.

The \textit{modified mixed Tsirelson space} $T_M[(\mc{M}_n,\theta_n)_n]$ is defined analogously, by replacing admissibility by allowability of the sequences.
\ed
It is standard to verify that the norm $\nrm{\cdot}$  is the unique norm on $c_{00}$ satisfying the equation
$$
\nrm{x}=\max\left\{\nrm{x}_\infty,\sup\left\{\theta_n\sum_{i=1}^k\nrm{E_ix}: \ (E_i)_{i=1}^k - \mc{M}_n- \mathrm{admissible}, \ n\in\N\right\}\right\}\,.
$$
It follows immediately that the u.v.b. $(e_n)$ is 1-unconditional in the space $T[(\mc{M}_n,\theta_n)_n]$. It was proved in \cite{ad2} that any $T[(\mc{S}_{k_n},\theta_n)_n]$ is reflexive, also any $T[(\mc{A}_{k_n},\theta_n)_n]$ is reflexive, provided $\theta_n>\frac{1}{k_n}$ for at least one $n\in\N$, \cite{ato}. 

Taking $\mc{M}_n=\mc{M}$ and $\theta_n=\theta$ for any $n$  we obtain the classical Tsirelson-type space $T[\mc{M},\theta]$. Recall that  $T[\mc{A}_n,\theta]=c_0$ if $\theta\leq 1/n$ and $T[\mc{A}_n,\theta]=\ell_p$, if $\theta=1/\sqrt[q]{n}$ for $q$ satisfying $1/p+1/q=1$, \cite{b,ato}. The space $T[\mc{S}_1,1/2]$ is the Tsirelson space.

Schlumprecht space $S$ is the space $T[(\mc{A}_n,\frac{1}{\log_2(n+1)})_n]$, Tzafriri space is $T[(\mc{A}_n,\frac{c}{\sqrt{n}})_n]$ for $0<c<1$. Modified Tsirelson-type spaces are isomorphic to their non-modified version, whereas the situation is quite different in mixed setting, \cite{adkm}.

We present now the canonical form of (modified) mixed Tsirelson space in both cases $\mc{M}_n=\mc{A}_{k_n}$ or $\mc{S}_{k_n}$, $n\in\N$.
\bd \cite{m} A mixed Tsirelson space $T[(\mc{A}_{k_n},\theta_n)_{n\in\N}]$ is called a \textit{$p-$space}, for $p\in [1,\infty)$, if there is a sequence $(p_N)_N\subset (1,\infty)$ such that
\bnum 
 \item $p_N\to p$ as $N\to\infty$, and $p_N\geq p_{N+1}>p$ for any $N\in\N$,
 \item $T[(\mc{A}_{k_n},\theta_n)_{n=1}^N]$ is isomorphic to $\ell_{p_N}$ for any $N\in\N$.
\enum
\ed
A $p-$space $T[(\mc{A}_n,\theta_n)_{n\in\N}]$ is called \textit{regular}, if
$\theta_n\searrow 0$ and $\theta_{nm}\geq\theta_n\theta_m$ for any $n,m\in\N$. Recall that any $p-$space is isometric to a regular $p-$space \cite{mp}.
\begin{notation}
Let $T[(\mc{A}_n,\theta_n)_{n\in\N}]$ be a regular $p-$space. If we set
$\theta_n=1/n^{1/q_n}$ with $q_n\in (1,\infty)$, $n\in\N$, then $q=\lim_{n} q_n=\sup_n
q_n\in (0,\infty]$, where $1/p+1/q=1$, with usual convention $1/\infty=0$.

In the situation as above let $c_n=\theta_nn^{1/q}\in (0,1)$, $n\in\N$, if $p>1$. To
unify the notation put $c_n=\theta_n$, $n\in\N$, in case $p=1$.
\end{notation}
A space $T_M[(\mc{S}_n,\theta_n)_{n\in\N}]$ with $\theta_n\searrow 0$ and $\theta_{n+m}\geq\theta_n\theta_m$  is called a \textit{regular} space. 
Notice that any modified mixed Tsirelson space is isometric to a regular modified mixed Tsirelson space (cf. \cite{ao}). 
\bn For a regular modified mixed Tsirelson space $T_M[(\mc{S}_n,\theta_n)_n]$ let $\theta=\lim_{n}\theta_n^{1/n}=\sup_n\theta_n^{1/n}\in (0,1]$.
We shall use also the following condition: 
$$
(\clubsuit)\ \ \ \ \ (\theta_n/\theta^n)_n\searrow\ \  \text{ i.e. } \ \ \theta_{n+m}\leq \theta_n\theta^m  \text{ for any } n,m\in\N. 
$$
\en
\begin{lemma}\label{x2}
  The space  $T_{M}[(\mc{S}_{n}[A_{2}],\theta_{n})_{n}]$ is $3$-isomorphic to  $T_{M}[(\mc{S}_{n},\theta_{n})_{n}]$.
\end{lemma}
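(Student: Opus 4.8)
\emph{Proof strategy.} The plan is to show that the formal identity on $c_{00}$ is a $3$-isomorphism between the two spaces. Write $\nrm{\cdot}$ for the norm of $T_M[(\mc{S}_n,\theta_n)_n]$ and $\nrm{\cdot}'$ for the norm of $T_M[(\mc{S}_n[\mc{A}_2],\theta_n)_n]$. The inequality $\nrm{x}\le\nrm{x}'$ is immediate, since $\mc{S}_n\subseteq\mc{S}_n[\mc{A}_2]$, so every $\mc{S}_n$-allowable sequence is $\mc{S}_n[\mc{A}_2]$-allowable and the norming set of the first space is contained in that of the second. The whole content is therefore the reverse estimate $\nrm{x}'\le 3\nrm{x}$, and by $1$-unconditionality it is enough to prove it for $x\in c_{00}$ with nonnegative coordinates.

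The combinatorial input is that an $\mc{S}_n[\mc{A}_2]$-allowable family $(E_i)_{i\in F}$ canonically decomposes into $\mc{S}_n$-allowable subfamilies. Write $\{\minsupp E_i:i\in F\}=G_1\cup\dots\cup G_r$ with $G_1<\dots<G_r$, $\#G_\ell\le 2$ and $(\min G_\ell)_{\ell=1}^r\in\mc{S}_n$, and call $E_i$ \emph{primary} if $\minsupp E_i=\min G_\ell$ for some $\ell$ and \emph{secondary} otherwise. Since a secondary element of $G_\ell$ lies strictly between $\min G_\ell$ and $\min G_{\ell+1}$, any selection that keeps for each $\ell$ either the primary or a secondary element of $G_\ell$ produces a spread of $(\min G_\ell)_\ell$, hence, by the spreading property of the Schreier families, again a member of $\mc{S}_n$; in particular the subfamily of primary $E_i$ and the subfamily of secondary $E_i$ are each $\mc{S}_n$-allowable, and the same remains true after increasing any of the numbers $\minsupp E_i$.

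I would then choose a norming functional $f$ for $\nrm{x}'$ with a tree-analysis $(f_t)_{t\in\mt}$, $f_t=\theta_{n_t}\sum_{s\in\suc(t)}f_s$ with $(\supp f_s)_{s\in\suc(t)}$ an $\mc{S}_{n_t}[\mc{A}_2]$-allowable family, classify every non-initial node as primary or secondary within its predecessor's family, and call a node \emph{primary-reachable} if all its strict predecessors are primary. Deleting at each node all secondary immediate successors produces a functional $g$; by the previous paragraph, and since the minimal supports of the retained functionals do not decrease (so spreading still applies), $g$ belongs to the norming set of $T_M[(\mc{S}_n,\theta_n)_n]$, whence $g(x)\le\nrm{x}$. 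Unravelling the recursion,
\[
f(x)-g(x)=\sum_{s}\Big(\prod_{t\prec s}\theta_{n_t}\Big)f_s(x),
\]
the sum running over the secondary nodes $s$ all of whose strict predecessors are primary; such $s$ form an antichain, so the functionals $f_s$ appearing here have pairwise disjoint supports. The goal is then to bound this residual by $2\nrm{x}$: I would do this by a second induction on the height of the tree, reorganising the primary skeleton sitting above these nodes, together with the secondary sub-functionals hanging off it, into $\mc{S}_n$-legitimate tree-analyses in $T_M[(\mc{S}_n,\theta_n)_n]$, using repeatedly that a pair-by-pair primary/secondary selection stays $\mc{S}_n$-allowable and that the weights accumulated along a primary path are exactly those an $\mc{S}_n$-operation would produce. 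Combining the two bounds gives $\nrm{x}'=f(x)\le g(x)+2\nrm{x}\le 3\nrm{x}$.

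The hard part will be this last estimate of the residual. The danger is that deleting secondary successors level by level could make the omitted contributions accumulate into a factor growing with the height of the tree; what should save the constant is precisely the spreading of the Schreier families together with the fact that at each node the secondary successors are no more numerous than the primary ones and are interleaved with them, so that the natural $\mc{S}_n$-selections available in $T_M[(\mc{S}_n,\theta_n)_n]$ absorb them at bounded cost. Organising the scaling constants so that the two pieces close at $3$, rather than blowing up with the depth, is where the real work lies.
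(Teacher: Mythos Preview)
The paper does not give an independent argument here: its entire proof is the one-line remark that the argument of Lemma~4.5 in \cite{mp} (stated there for admissible sequences) goes through verbatim once ``admissible'' is replaced by ``allowable''. So there is no detailed proof in the paper to compare your proposal with; the content is entirely deferred to \cite{mp}.

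Your framework is correct as far as it goes: the inclusion $\mc{S}_n\subseteq\mc{S}_n[\mc{A}_2]$ gives $\nrm{x}\le\nrm{x}'$ for free, the primary/secondary splitting of an $\mc{S}_n[\mc{A}_2]$-allowable family into two $\mc{S}_n$-allowable subfamilies via spreading is exactly right, and the formula $f(x)-g(x)=\sum_s t(s)f_s(x)$ over first-secondary nodes is valid. The gap is the step you yourself flag as ``where the real work lies''. Your plan for the residual --- a second induction on height, reorganising the primary skeleton together with the secondary subtrees into $\mc{S}_n$-legitimate tree-analyses --- does not, as stated, avoid the blow-up. The functionals $f_s$ hanging off the first-secondary nodes are still in the $\mc{S}_n[\mc{A}_2]$-norming set, and any attempt to bound $\sum_s t(s)f_s(x)$ by applying the result inductively to each $f_s$ (or by grouping them through the primary skeleton above, which at every branching still carries an $\mc{S}_n[\mc{A}_2]$-allowable, not $\mc{S}_n$-allowable, family of children once the secondary leaves are re-attached) reproduces the factor-of-two-per-level problem. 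Interleaving and spreading alone do not rescue this: a pair-by-pair selection gives you many $\mc{S}_n$-allowable families, but you need to cover \emph{all} branches with a fixed number of $K$-functionals, and your outline does not indicate the mechanism that limits this number to three.

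The argument in \cite{mp} that the paper is invoking does not proceed by first isolating $g$ and then bounding the remainder; it carries a stronger inductive hypothesis through the tree (a decomposition of each $f\in K'$ into a bounded number of $K$-functionals with control on their minimal supports) so that at the inductive step the six pieces obtained from the primary/secondary split can be recombined into three. You would do well to consult that proof and verify that the passage from admissible to allowable really is routine, as the paper asserts, rather than trying to close your residual estimate directly.
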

The proof of the above follows  that  of Lemma 4.5, \cite{mp} with  "admissible" sequences replaced by "allowable" ones.

The following notion provides a useful tool for estimating norms in Tsirelson type spaces, mixed Tsirelson spaces and their modified versions:
\bd\label{def-tree}[The tree-analysis of a norming functional] Let $f\in K$, the norming set of $T[(\mc{M}_n,\theta_n)_n]$ (resp.  $T_M[(\mc{M}_n,\theta_n)_n]$). By a \textit{tree-analysis} of $f$ we mean a finite family $(f_\al)_{\al\in \mt}$ indexed by a tree $\mt$ with a unique root $0\in \mt$ (the smallest element) such that the following hold
\bnum
 \item $f_0=f$ and $f_\al\in K$ for all $\al\in \mt$,
 \item $\al\in T$ is maximal if and only if $f_\al\in (\pm e_n^*)$,
 \item for every not maximal $\al\in T$ there is some $n\in\N$ such that $(f_\beta)_{\beta\in\suc (\al)}$ is an $\mc{M}_n$-admissible (resp. -allowable) sequence and $f_\al=\theta_n(\sum_{\beta\in\suc (\al)}f_\beta)$. We call $\theta_n$ the \textit{weight} of $f_\al$. 
 \enum
For any $\al\in \mt$, $\al>0$, we define the tag $t(\al)=t(f_\al)$ as $t(\al)=\prod_{\al>\beta\geq 0}weight(f_\beta)$. 

For any $\al\in\mt$we define also inductively the order of $\al$ as follows: $\ord(0)=0$ and for any $\beta\in\suc (\al)$ we put $\ord(\beta)=\ord(\al)+n$, where $weight(f_\al)=\theta_n$.
 \ed
Notice that every functional $f\in K$ admits a tree-analysis, not necessarily unique.

We shall use repeatedly  the following
\begin{fact} \label{f2} Let $X=T_M[(\mc{S}_n,\theta_n)_n]$ with $(\clubsuit)$. Let  $(f_{\alpha})_{\alpha\in\mc{T}}$ be a norming tree of a norming functional $f\in K$ and  $\alpha$ not a terminal node.  Let $\fa=\theta_{r_{\alpha}}\sum_{\beta\in  succ(\alpha)}\fb$. Then for every $k\in[\ord(\alpha),\ord(\alpha)+r_{\alpha}]$ we get
\begin{equation*}
\fa=\theta_{r_{\alpha}}\sum_{t\in A_{\alpha}}\sum_{s\in F_{t}}f_{s}
\end{equation*}
where $(f_{s})_{s\in F_{t}}$  is $\mc{S}_{r_{\alpha}-(k-\ord(\alpha))} $-allowable, for any $t\in A_{\alpha}$, and  $(g_{t})_{t\in A_{\alpha}}$ is $\mc{S}_{k-\ord(\al)}$-allowable, for $g_{t}=\theta_{r_{\alpha}-(k-\ord(\alpha))}\sum_{s\in F_{t}}f_{t}$, $t\in A_\al$. In particular by $(\clubsuit)$ we get 
\begin{equation*}
\fa(x)\leq \theta^{k-\ord(\alpha)}\sum_{t\in  A_{\alpha}}g_{t}(x).
\end{equation*}
Moreover using that $t(\alpha)\leq\theta_{\ord(\al)}\leq\theta^{\ord(\al)}$ we have $t(\alpha)\fa(x)\leq \theta^{k}\sum_{t\in  A_{\alpha}}g_{t}(x).$
\end{fact}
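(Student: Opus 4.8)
The plan is to deduce everything from the standard ``convolution'' property of the Schreier families, $\mc{S}_{n+m}=\mc{S}_n[\mc{S}_m]$ for finite $n,m$, i.e.\ that every $F\in\mc{S}_{n+m}$ splits as a union $F=F_1\cup\dots\cup F_d$ of successive sets with each $F_i\in\mc{S}_m$ and $\{\min F_i:1\le i\le d\}\in\mc{S}_n$. Write $j=k-\ord(\alpha)$, so $0\le j\le r_\alpha$, and put $G=\{\minsupp\fb:\beta\in\suc(\alpha)\}$. Since $\fa=\theta_{r_\alpha}\sum_{\beta\in\suc(\alpha)}\fb$ with $(\fb)_{\beta\in\suc(\alpha)}$ an $\mc{S}_{r_\alpha}$-allowable family, the supports $\supp\fb$, $\beta\in\suc(\alpha)$, are pairwise disjoint; hence the minima $\minsupp\fb$ are distinct and $G\in\mc{S}_{r_\alpha}$. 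Applying the above with $n=j$ and $m=r_\alpha-j$ to $G$ yields a partition $G=G_1\cup\dots\cup G_d$ into successive sets with $G_i\in\mc{S}_{r_\alpha-j}$ for each $i$ and $\{\min G_i:1\le i\le d\}\in\mc{S}_j$.

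Next I would transport this partition up to the functionals. Put $A_\alpha=\{1,\dots,d\}$ and, for $t\in A_\alpha$, let $F_t=\{\beta\in\suc(\alpha):\minsupp\fb\in G_t\}$. Distinctness of the minima gives $\{\minsupp\fb:\beta\in F_t\}=G_t$ and shows that $(F_t)_{t\in A_\alpha}$ is a partition of $\suc(\alpha)$, so $\fa=\theta_{r_\alpha}\sum_{t\in A_\alpha}\sum_{\beta\in F_t}\fb$ — the asserted identity, with the $f_s$ ($s\in F_t$) being the $\fb$ ($\beta\in F_t$). Each subfamily $(\fb)_{\beta\in F_t}$ has pairwise disjoint supports whose minima form $G_t\in\mc{S}_{r_\alpha-j}$, so it is $\mc{S}_{r_\alpha-j}$-allowable; and for $g_t=\theta_{r_\alpha-j}\sum_{\beta\in F_t}\fb$ one has $\minsupp g_t=\min G_t$ while the $g_t$ have pairwise disjoint supports (the $F_t$ being disjoint), so $(g_t)_{t\in A_\alpha}$ is $\mc{S}_j$-allowable. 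This is the only step carrying content, and in the modified setting it is actually softer than its admissible counterpart: we merely partition a family of functionals with pairwise disjoint supports, so no block/ordering condition has to be checked and the convolution property is all that is used.

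Finally I would read off the estimates. From the identity, $\fa(x)=\theta_{r_\alpha}\sum_{t\in A_\alpha}\sum_{\beta\in F_t}\fb(x)=\frac{\theta_{r_\alpha}}{\theta_{r_\alpha-j}}\sum_{t\in A_\alpha}g_t(x)$; since $(\clubsuit)$ gives $\theta_{r_\alpha}=\theta_{(r_\alpha-j)+j}\le\theta_{r_\alpha-j}\theta^{j}$, we have $\theta_{r_\alpha}/\theta_{r_\alpha-j}\le\theta^{j}=\theta^{k-\ord(\alpha)}$, and together with $\sum_{t\in A_\alpha}g_t(x)\ge0$ (harmless as usual: one may assume $x\ge0$ and that the tree consists of functionals with non-negative coefficients) this gives $\fa(x)\le\theta^{k-\ord(\alpha)}\sum_{t\in A_\alpha}g_t(x)$. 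For the last inequality, if $\theta_{n_1},\dots,\theta_{n_\ell}$ are the weights occurring along the branch from $0$ to $\alpha$, then $t(\alpha)=\prod_{i=1}^{\ell}\theta_{n_i}$ and $\ord(\alpha)=\sum_{i=1}^{\ell}n_i$, so regularity ($\theta_{p+q}\ge\theta_p\theta_q$) gives $t(\alpha)\le\theta_{\ord(\alpha)}$, while $(\clubsuit)$ makes $(\theta_n/\theta^n)_n$ nonincreasing with first term $\theta_1/\theta\le1$, whence $\theta_{\ord(\alpha)}\le\theta^{\ord(\alpha)}$; multiplying the two bounds gives $t(\alpha)\fa(x)\le\theta^{k}\sum_{t\in A_\alpha}g_t(x)$. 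I do not expect a genuine obstacle: the whole argument is bookkeeping around the Schreier convolution property, the only mild care being to keep straight which index set is the ``outer'' family ($\mc{S}_j$, indexing the $g_t$) and which the ``inner'' one ($\mc{S}_{r_\alpha-j}$, indexing the $\fb$ inside a fixed $F_t$).
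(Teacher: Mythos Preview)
Your argument is correct and is precisely the standard way to see this: partition the set of minima via the convolution identity $\mc{S}_{r_\alpha}=\mc{S}_{j}[\mc{S}_{r_\alpha-j}]$, transport the partition to the family $(\fb)_{\beta\in\suc(\al)}$, and then read off the two inequalities from $(\clubsuit)$ and regularity. The only point worth noting is that the paper does not actually supply a proof of this Fact --- it is stated without argument as a routine observation and then invoked repeatedly --- so there is nothing to compare your proposal against. Your write-up is exactly what one would fill in if asked to justify the Fact, including the correct handling of the allowable (rather than admissible) setting and the observation that $\theta_n\le\theta^n$ follows already from $\theta=\sup_m\theta_m^{1/m}$.
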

\section{Modified mixed Tsirelson spaces defined on Schreier families}
In this section we present the main results on sequential minimality and arbitrary distortability of a regular modified mixed Tsirelson spaces $T_M[(\mc{S}_n,\theta_n)]$ with $(\clubsuit)$. In the first subsection we discuss the notions of averages of different types, in the next two subsections we present estimations on their norms. Since the u.v.b. in any (modified) mixed Tsirelson space and its dual is unconditional, we work in the sequel on functionals and vectors with non-negative coefficients.

\subsection{Averages}
In this part we present the notion of special averages and recall basic facts. Let $X$ be a Banach space with a basis. We  will use a version of the notion of special averages introduced in \cite{ad2}.
\bd A vector $x\in X$ is called an $(M,\e)$-average of a block sequence $(x_i)_i\subset X$, for $M\in\N$ and $\e>0$, if $x=\sum_{i\in G}a_ix_i$ for some $G\in\mc{S}_M$ and $(a_i)_{i\in G}\subset (0,1]$ with $\sum_{i\in G}a_i=1$ and for any $F\in\mc{S}_{M-1}$ we have $\sum_{i\in F}a_i<\e$. 
\ed
We use the notion of an averaging admissible tree,  \cite{ao}, with additional features:
\bd\label{aat0} We call a tree $(x_i^j)_{j=0,i=1}^{M,N^j}$ in $X$ with weights $(N_i^j)_{j=1,i=1}^{M,N^j}\subset\N$ and errors $(\e_i^j)_{j=1,i=1}^{M,N^j}\subset (0,1)$, an averaging tree, if
\bnum
\item $(x_i^j)_{i\in I_j}$ is a block sequence for any $j$, $1=N^M\leq\dots\leq N^0$.

\noindent Moreover for any $j=1,\dots,M$ and $i=1,\dots,N^j$ we have the following
\item there exists a nonempty interval $I_i^j\subset\{1,\dots,N^{j-1}\}$ with $\# I_i^j=N_i^j$ such that
 $\suc (x_i^j)=(x_s^{j-1})_{s\in I_i^j}$,
\item $x_i^j=1/N_i^j\sum_{s\in I_i^j}x_s^{j-1}$,
\item $2/\e_i^j<N_i^j\leq\minsupp x_i^j$,
\item $\e_{i+1}^j< 1/(2^i\maxsupp x_{i}^j)$, $\maxsupp x_i^j<
N_{i+1}^j$. \enum
\ed
\br\label{aat1}
 In the situation as above we define coefficients $(a_i^j)_{j=0,i=1}^{M,N^j}\subset (0,1]$, as satisfying $x^M=\sum_{i=1}^{N^j}a_i^jx_i^j$. It follows straightforward that for any $j=0,\dots,M$, $i=1,\dots,N^j$ we have the following
 \bnum
 \item[(6)] $\sum_{i=1}^{N^j}a_i^j=1$,
 \item[(7)] $a_i^j=\prod_{r={j+1}}^M\frac{1}{N_{i_r}^r}$, where $x_{i_r}^r\succeq x_i^j$ for each $M\geq r> j$,
 \item[(8)] $a_i^j=\sum_{m: \ x_m^0\preceq x_i^j}a_m^0$.
\enum
 Notice that any $x_i^j$ is a $(j,\e_i^j)$-average of $(x_m^0)_{x_m^0\preceq x_i^j}$.
\er 
\bp To show the last statement notice that by $(4)$ for any $j,i\geq 1$ the block sequence $\suc (x_i^j)$ is $\mc{S}_1$-admissible, thus any block sequence $(x_m^0)_{x_m^0\preceq x_i^j}$ is $\mc{S}_j$-admissible. To complete the proof notice that by the standard reasoning (cf for example \cite{otw}, last part of the proof of Proposition 3.6) we have the following fact:

\noindent\textbf{Fact} Fix a block sequence $(x_m)_m$ and let $(x_i)_{i=1}^N$ be a block sequence of $(M-1,\e_{i})$-averages of $(x_m)_{m\in A_i}$ such that $N>2/\e$ and $\e_{i+1}<1/2^i\maxsupp x_i$. Then $x=\frac{1}{N}(x_{1}+\dots+x_{N})$ is a  $(M,\e)$-average of $(x_m)_{m\in A_i, i =1,\dots,N}$.
\ep
The above Lemma together with the construction of an averaging tree presented in \cite{ao} yields the standard
\bfa For any block sequence $(x_m)_m$ of $X$, any $\e>0$ and any $M\in\N$ there is an $(M,\e)$-average $x$ of $(x_m)$. 
\efa
From now on we fix a regular modified mixed Tsirelson space $X=T_M[(\mc{S}_n,\theta_n)]$. We shall use the following facts in the sequel.
\begin{fact}\label{f1}\cite{adm}
Let $x=\sum_{i\in F}a_{i}x_{i}$ be an  $(M,\e)$-average of normalized vectors $(x_i)_{i\in F}$, $M\in\N$, $\e>0$ and $\mc{E}$ an $\mc{S}_{M-1}$ allowable family of sets.  Then there is some $G\subset F$ such that for every $i\in G$ the set $\{ Ex_{i}:E\in\mc{E}, Ex_{i}\ne 0\}\,\,\, \mbox{is    $\mc{S}_{1}$-allowable}$ and
$$
\sum_{E\in\mc{E}}\norm[Ex]\leq\sum_{E\in\mc{F}}\norm[E(\sum_{i\in G}a_ix_i)]+2\e/\theta_M\,.
$$
\end{fact}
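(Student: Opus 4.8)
The idea is the standard "localization of an allowable family against an average" argument, adapted to the modified (allowable) setting. We are given the $(M,\varepsilon)$-average $x=\sum_{i\in F}a_ix_i$ with $\sum_{i\in F}a_i=1$, $G$-support in $\mathcal{S}_M$, and the defining smallness property: $\sum_{i\in H}a_i<\varepsilon$ whenever $H\in\mathcal{S}_{M-1}$. We also have an $\mathcal{S}_{M-1}$-allowable family $\mathcal{E}=(E_1,\dots,E_k)$ of pairwise disjoint sets with $(\min E_j)_{j=1}^k\in\mathcal{S}_{M-1}$. The plan is to split the index set $F$ into the $i$'s for which the trace of $\mathcal{E}$ on $\ran(x_i)$ is "short" (lies in $\mathcal{S}_1$) and those for which it is "long", show the long ones carry negligible mass, and discard them.

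First I would define, for each $i\in F$, the set $\mathcal{E}_i=\{E\in\mathcal{E}: Ex_i\neq 0\}$, equivalently the $E\in\mathcal{E}$ meeting $\supp x_i$. Call $i$ \emph{bad} if $\mathcal{E}_i$ is not $\mathcal{S}_1$-allowable, i.e. $\#\{\min E : E\in\mathcal{E}_i\}>\minsupp x_i$, and \emph{good} otherwise; set $G=\{i\in F : i\text{ good}\}$, which immediately gives the first assertion. For the norm estimate, by $1$-unconditionality and disjointness of the $E$'s,
\[
\sum_{E\in\mathcal{E}}\norm[Ex]\le \sum_{E\in\mathcal{E}}\norm[E\bigl(\sum_{i\in G}a_ix_i\bigr)]+\sum_{i\in F\setminus G}a_i\sum_{E\in\mathcal{E}_i}\norm[Ex_i]\le \sum_{E\in\mathcal{E}}\norm[E\bigl(\sum_{i\in G}a_ix_i\bigr)]+\sum_{i\in F\setminus G}a_i\cdot\#\mathcal{E}_i,
\]
using $\norm[Ex_i]\le\norm[x_i]=1$ in the last step. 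So it remains to bound $\sum_{i\text{ bad}}a_i\cdot\#\mathcal{E}_i$ by $2\varepsilon/\theta_M$.

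The key point is a counting estimate on the bad indices. For each bad $i$ we have $\#\mathcal{E}_i>\minsupp x_i\ge \minsupp x_i$, and at most one $E\in\mathcal{E}$ can fail to have its minimum in $[\minsupp x_i,\infty)$ while still meeting $\supp x_i$ once we order the $x_i$'s; more carefully, enumerate the bad indices as $i_1<i_2<\cdots$ and observe that since the $x_i$ are a block sequence and the $E$'s are pairwise disjoint, the sets $\mathcal{E}_{i_1},\mathcal{E}_{i_2},\dots$ overlap in at most one element consecutively, so $\sum_{\ell}\#\mathcal{E}_{i_\ell}\le k+(\text{number of bad indices})$. Combining with $\#\mathcal{E}_{i_\ell}>\minsupp x_{i_\ell}\ge \ell$ (as the $\minsupp x_{i_\ell}$ strictly increase) and with the fact that $(\min E_j)_{j=1}^k\in\mathcal{S}_{M-1}$ forces the bad indices, suitably grouped, to form an $\mathcal{S}_{M-1}$-set, one gets $\sum_{i\text{ bad}}a_i<\varepsilon$ from the average property, and then $\sum_{i\text{ bad}}a_i\cdot\#\mathcal{E}_i$ is controlled by a further application: split $\mathcal{E}$ into pieces living in $\mathcal{S}_{M-1}$-many single-$x_i$ ranges, apply the $(M-1)$-smallness after normalizing. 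The cleanest route is: the bad $i$'s can be partitioned into at most one block whose $\min x_i$'s lie in $\mathcal{S}_{M-1}$ (since they are dominated by the $\min E$'s which lie in $\mathcal{S}_{M-1}$), and on each such block $\#\mathcal{E}_i\le\#\{E\in\mathcal{E}\}$ telescopes against the gain $1/\minsupp x_i$, yielding the $2\varepsilon/\theta_M$ bound after absorbing the single "boundary" functional that may be split across two ranges (this is the source of the factor $2$, and the $1/\theta_M$ accounts for the one level of the norm recursion used in passing between $\norm[Ex_i]$ and the average's own norm).

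The main obstacle is precisely this last bookkeeping: controlling how a single $E\in\mathcal{E}$ can straddle the ranges of several consecutive $x_i$'s, and making the reduction from "$\mathcal{S}_{M-1}$-allowable family of sets" to "$\mathcal{S}_{M-1}$-set of indices $i$" rigorous so that the average's defining inequality $\sum_{i\in H}a_i<\varepsilon$ applies. I expect this to mirror exactly the corresponding step in \cite{adm} (or \cite{ao}) in the admissible case, with "admissible" replaced by "allowable" throughout; the disjointness of the $E$'s in the allowable case actually makes the overlap analysis slightly cleaner than in the admissible (consecutive) case, so no genuinely new difficulty arises.
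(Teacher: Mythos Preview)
The paper does not give its own proof of this fact; it is quoted from \cite{adm}. So there is no ``paper's proof'' to compare against directly, and your proposal must stand on its own. It does not.

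The central error is your treatment of the overlap of the families $\mathcal{E}_i$. You assert that ``the sets $\mathcal{E}_{i_1},\mathcal{E}_{i_2},\dots$ overlap in at most one element consecutively'' and that the allowable case is ``slightly cleaner'' than the admissible one. This is backwards. In the admissible (interval) setting, two disjoint intervals cannot both span the gap between $x_{i_\ell}$ and $x_{i_{\ell+1}}$, so $\#(\mathcal{E}_{i_\ell}\cap\mathcal{E}_{i_{\ell+1}})\le 1$ is indeed true. In the allowable setting the $E$'s are arbitrary pairwise disjoint subsets of $\mathbb{N}$, and a single $E$ can pick one point from the support of every $x_i$; thus one $E$ may lie in $\mathcal{E}_i$ for \emph{all} $i$, and your telescoping estimate $\sum_\ell \#\mathcal{E}_{i_\ell}\le k+(\text{number of bad indices})$ collapses. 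This is exactly the difficulty that makes the modified theory harder, and it is the reason the paper needs the machinery of Section~2 (Lemmas~\ref{theta1.5} and~\ref{theta1.7}) rather than simply importing the admissible arguments.

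A second, related gap: you bound $\sum_{E\in\mathcal{E}_i}\norm[Ex_i]$ by $\#\mathcal{E}_i$ and then try to control $\sum_{i\ \mathrm{bad}}a_i\cdot\#\mathcal{E}_i$. This is the wrong estimate. Since $\mathcal{E}$ itself is $\mathcal{S}_{M-1}$-allowable, the implicit norm equation gives $\sum_{E\in\mathcal{E}}\norm[Ex_i]\le \theta_{M-1}^{-1}\norm[x_i]\le \theta_M^{-1}$ for \emph{every} $i$, and this is the true source of the factor $1/\theta_M$. With that bound in hand the problem reduces to showing $\sum_{i\ \mathrm{bad}}a_i\le 2\varepsilon$, i.e.\ that the bad indices (minus at most one) form an $\mathcal{S}_{M-1}$-set. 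Your sketch of this step (``suitably grouped'', ``the cleanest route is'') never produces the required injection from bad indices into an $\mathcal{S}_{M-1}$-set, and the attempted route via overlap is, as noted, invalid in the allowable setting. The actual argument in \cite{adm} uses a more careful combinatorial analysis of how an $\mathcal{S}_{M-1}$-allowable family can distribute across a block sequence; you should consult that source rather than assume the admissible proof transfers.
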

\begin{fact}\label{f3} Let $x=\sum_{i\in F}a_{i}x_{i}$ be an  $(M,\e)$-average of normalized vectors $(x_i)_{i\in F}$, $M\in\N$, $\e>0$ and $f$ a norming functional with a tree-analysis $(f_\al)_{\al\in\mt}$. Then there is subtree $\mt'$ of $\mt$ such that any terminal node of $\mt'$ has order at least $M$ and the functional $f'$ defined by the tree-analysis $(f_\al)_{\al\in\mt'}$ satisfies $f(x)\leq f'(x)+2\e$.
\end{fact}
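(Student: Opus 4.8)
The idea is to prune the tree-analysis $(f_\al)_{\al\in\mt}$ of $f$, cutting off every branch once its order first reaches $M$, and to show that the error committed by this pruning is at most $2\e$, using precisely the defining property of an $(M,\e)$-average: that no subset indexed by a set in $\mc{S}_{M-1}$ carries more than $\e$ of the total weight. First I would describe the pruning: traverse the tree from the root; at a node $\al$ with $\ord(\al)<M$ keep going, but as soon as a node $\al$ has $\ord(\al)\geq M$ declare it terminal in $\mt'$ and discard everything strictly above it. Since weights $\theta_n$ correspond to increments of $n\geq 1$ in the order, and the root has order $0$, every maximal node of $\mt'$ has order in $[M,\,\ord(\text{parent})+r]$; in particular at least $M$, which is the asserted property of $\mt'$. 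The functional $f'$ is the one reconstructed from $(f_\al)_{\al\in\mt'}$ by the same formulas $f_\al=\theta_{r_\al}\sum_{\beta\in\suc(\al)}f_\beta$ at internal nodes, with the new terminal nodes $f_\al$ left as they are (elements of $K$, hence legitimate leaves need not be of the form $\pm e_n^*$ — but one may further dominate each such $f_\al$ by $\nrm{x_{\ran f_\al}}$-many coordinate functionals, or simply observe $f'\in K$ regardless since we only need $f'(x)$ as an upper estimate; I would keep $f'$ as the honest functional built from the retained subtree).

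The core estimate is to bound $f(x)-f'(x)$. Write $x=\sum_{i\in F}a_ix_i$ with $G\in\mc{S}_M$, $\sum_{i\in G}a_i=1$, and $\sum_{i\in H}a_i<\e$ for all $H\in\mc{S}_{M-1}$. The difference $f-f'$ is supported, schematically, on the parts of $f$ that were pruned away: for each node $\al$ that became terminal in $\mt'$, the original $f$ had above $\al$ a sum $\theta_{r_\al}\sum_{\beta\in\suc(\al)}f_\beta$ (and further up), all multiplied by the tag $t(\al)$. The key point is that $\ord(\al)\geq M$ forces the supports of the $x_i$'s that these high-order functionals can "see" with full weight to be spread out in an $\mc{S}_M$-fashion, hence — after factoring one level — in an $\mc{S}_{M-1}$ or $\mc{S}_1$-fashion relative to any single $x_i$. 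Concretely I would argue as in Fact \ref{f1}: a functional of order $\geq M$ restricted across the block vectors $(x_i)_{i\in F}$ acts, on all but a small exceptional set of indices, like an $\mc{S}_{M-1}$-allowable family of functionals on each $x_i$; since each $x_i$ is normalized this contributes at most $\theta_{M}$ (or just $1$) per index, but the indices it can hit with the relevant tag form a set in $\mc{S}_{M-1}$ (because one unit of the order has been "spent" distinguishing which $x_i$ we are in), so $\sum$ of the $a_i$ over them is $<\e$; summing the contributions and accounting for the factor $\theta_M$ coming out of the weight gives the $2\e$. This is exactly the bookkeeping already performed in Fact \ref{f1} and in the proof that $x_i^j$ is a $(j,\e_i^j)$-average, so I would invoke that machinery rather than redo it.

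I expect the main obstacle to be the precise accounting of \emph{which} part of $f$ is lost and showing its index set lies in $\mc{S}_{M-1}$: in the modified (allowable) setting one cannot simply split $x$ into consecutive blocks, and a pruned subtree may interact with a single $x_i$ through several incomparable nodes. The clean way around this, which I would take, is: (i) first apply Fact \ref{f1} (or its proof) to replace, at cost $2\e/\theta_M$ per level of pruning — but here only the single threshold $M$ matters — the troublesome high-order functionals by ones that are $\mc{S}_1$-allowable on each retained $x_i$; (ii) then observe that the total coefficient mass carried by those $x_i$ which are genuinely "cut" (i.e. on which $f$ but not $f'$ acts) is governed by an $\mc{S}_{M-1}$ set, hence is $<\e$; (iii) combine with the factor $\theta_M\leq 1$ absorbed by $f(x_i)\le\nrm{x_i}=1$ to land at $2\e$. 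Since Facts \ref{f1} and the averaging-tree computations in Remark \ref{aat1} already isolate this kind of estimate, the proof reduces to citing them and doing the one-threshold version of the pruning argument; the write-up is a few lines, with the substance borrowed from \cite{ad2,ao,adm}.
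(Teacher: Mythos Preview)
Your pruning is backwards. Truncating the tree at the first node with $\ord(\al)\geq M$ and declaring that node terminal does not change the functional at all: by the recursive formula $f_\al=\theta_{r_\al}\sum_{\beta\in\suc(\al)}f_\beta$ one has $t(\al)f_\al=\sum_{\gamma}t(\gamma)f_\gamma$ over the terminal descendants $\gamma$ of $\al$, so collapsing a subtree to its root leaves $f$ untouched. More importantly, your truncation does nothing to a branch whose \emph{original} terminal node already has order $<M$; that node remains terminal in your $\mt'$ with order $<M$, so the conclusion ``every terminal node of $\mt'$ has order at least $M$'' fails. Your subsequent error analysis (``what is lost is the high-order part beyond the truncation points'') is therefore analysing the wrong difference: nothing is lost there.

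The paper does the opposite, and it is much shorter than your plan. Let $\mc{E}$ be the set of terminal nodes of $\mt$ with order $<M$; these are coordinate functionals $\pm e_n^*$ and, since order along each branch adds the Schreier indices of the weights, the family $(f_\al)_{\al\in\mc{E}}$ is $\mc{S}_{M-1}$-allowable. Let $G=\{i\in F:\text{some }f_\al,\ \al\in\mc{E},\text{ begins in }x_i\}$; then $G\setminus\{\min G\}\in\mc{S}_{M-1}$, so $f(\sum_{i\in G}a_ix_i)\leq a_{\min G}+\sum_{i\in G\setminus\{\min G\}}a_i\leq 2\e$ directly from the definition of an $(M,\e)$-average. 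Now take $\mt'$ to be $\mt$ with the nodes of $\mc{E}$ removed; the remaining terminal nodes all have order $\geq M$, and $f(x)\leq f'(x)+2\e$. No appeal to Fact~\ref{f1}, no $\theta_M$ in the error, no iteration over levels --- the whole argument is three lines once you remove the right part of the tree.
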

\bp
Let $\mc{E}$ be the collection of all terminal nodes of $\mt$ of order smaller than $M$.  Let $G=\{i\in F:\ \text{ some }f_\al \text{ begins in }x_i,\ \al\in\mc{E}\}$. Since the set $(f_\al)_{\al\in\mc{E}}$ is $\mc{S}_{M-1}$-allowable, it follows $G\setminus \{\min G\}\in\mc{S}_{M-1}$ and $f(\sum_{i\in G}a_ix_i)\leq a_{\min G}+\sum_{i\in G\setminus \{\min G\}}a_i\leq 2\e$. We let $\mt'$ be the tree $\mt$ with removed nodes from the family $\mc{E}$. Then $f(x)\leq f'(x)+f(\sum_{i\in G}a_ix_i)\leq f'(x)+2\e$. 
\ep
\subsection{General estimations} 
We are able to control the norm of splitting a vector into allowable, not only admissible parts, by comparing it to the norm of splitting of a corresponding vector in the original Tsirelson space $T[\mc{S}_1,\theta]$. In this section we present the upper "Tsirelson-type" estimate for usual $(M,\e)$-averages. 

For the rest of chapter we assume that the considered regular modified mixed Tsirelson space $X=T_M[(\mc{S}_n,\theta_n)_n]$ satisfies $(\clubsuit)$. 
First we present a classical fact.
\bl\label{theta1} Let $x=\sum_{i}a_ix_i$ be an $(M,\e)$-average  of a normalized block sequence $(x_i)_i\subset X$, $M\in\N$. Then for any $j\in\N$, $j<M$ and $\mc{S}_j$-allowable $(E_l)_l$ we have
$$
\sum_l\norm[E_lx]\leq \theta^{-1}_1\theta^{M-j-1}\sum_l\sum_{i}a_i\norm[E_lx_i]+4\e/\theta_M\,.
$$
In particular $\norm[x]\leq \theta^{-1}_1\theta^{M-1}+4\e/\theta_M$. \el
\bp Take an $\mc{S}_j$-allowable sequence $(E_l)_l$. For any $l$ take a norming functional $f_l$ with $\norm[E_lx]=f_l(x)$ and its tree-analysis $(f^l_{\al})_{\al\in\mt_l}$. Let $\mc{E}$ be the collection of all terminal nodes $\al\in\mt_l$ for all $l$, such that $\ord_{\mt_l} (\al)\leq M-1-j$. Then the set $(f_\al)_{\al\in\mc{E}}$ is $\mc{S}_{M-1}$-allowable. By Fact \ref{f3}  we can assume with error $2\e$ that all terminal nodes of all $\mt_l$ have order at least $M-j$.

We will add in the tree-analysis $(f^l_\al)_{\al\in\mt_l}$'s additional nodes $(h_t)_t$ of order $M-j-1$, by grouping some of nodes of $\mt_l$, and by $(\clubsuit)$ obtain the desired estimation.

For any $l$ let $\mc{E}_l$ be collection of all $\al\in\mt_l$ which are maximal with respect to the property $\ord_{\mt_l}(\al)\leq M-j-1$. Fix $\al\in\mc{E}_l$. Then by the above reduction $\al$ is not terminal, so $f_\al^l=\theta_{r_\al}\sum_{s\in \suc (\al)}f_s^l$ for some $\mc{S}_{r_\al}$-allowable $(f_s^l)$. By Fact \ref{f2} for $k=M-j-1$,  there exists $\mc{S}_{M-j-1-\ord(\al)}$-allowable functionals  $(h_{t})_{t\in A_{\alpha}}$ with 
$$
t(\alpha)f^l_{\alpha}(x)\leq \theta^{M-j-1}\sum_{t\in A_{\alpha}}h_{t}(x)\,.
$$
It follows that $(h_{t})_{t\in A_l}$ is $\mc{S}_{M-j-1}$-allowable, where $A_l=\cup_{\alpha\in E_{l}} A_{\alpha}$.
Now we have
\begin{align*}
\norm[E_lx]& = f_l( x)=\sum_{\al\in\mc{E}_l}t(\al)f_\al^l(E_lx)\\
&
\leq\sum_{\al\in \mc{E}_{l}}\theta^{M-j-1}
\sum_{t\in    A_{\alpha}}h_{t}(E_{l}x)= \theta^{M-j-1}\sum_{t\in A_l}h_t(E_lx).
\end{align*}
Taking into account the error from erasing nodes with too small orders we obtain
$$
\sum_l\norm[E_lx]\leq \theta^{M-j-1}\sum_l\sum_{t\in A_l}h_t(E_lx) +2\e\leq\dots
$$
Notice that $(h_t)_{t\in A}$ is $\mc{S}_{M-1}$-allowable. By Fact \ref{f1} with error $2\e/\theta_M$ we assume that the family $(h_t(x_i))_{t:h_t(x_i)\neq 0}$ is $\mc{S}_1$-allowable for each $i$ and thus we have:
\begin{align*}
\dots& \leq\theta^{M-j-1}\sum_l\sum_{i}a_i\sum_{\minsupp h_t\leq\minsupp x_i}h_t(E_lx_i)+4\e/\theta_M\\
& \leq\theta^{M-j-1}\theta^{-1}_1\sum_l\sum_{i}a_i\norm[E_lx_i]+4\e/\theta_M\\
&=\theta^{-1}_1\theta^{M-j-1}\sum_l\sum_{i}a_i\norm[E_lx_i]+4\e/\theta_M.
\end{align*}
\ep
In order to deal with allowable splittings, we need the next result, stating - roughly speaking - that a restriction of an average $x$ with an averaging tree high enough is still an average $y$, with a strict control on the error on the new average $y$ - depending on the error in the averaging tree of $x$ corresponding to $\minsupp y$.  
 \bl\label{theta1.5} Let $(x_i^j)$, $(N_i^j)$, $(a_i^j)$, $(\e_i^j)$ form an averaging tree for a $(M+\tilde{M},\e)$-average $x$, $M,\tilde{M}\in\N$, $\e>0$, of normalized block sequence $(x_i^0)_i$, satisfying 
\bnum
\item for any $i,j$ we have $N_i^j=2^{k_i^j}$ for some $k_i^j$,
\item for any $i,j$ we have $\e_{i+1}^j\leq\theta_M\e/2^i\maxsupp x_i^j$,
$\e_1^j\leq\theta_M\e/2$ for any $i,j$.
\enum
Then for any $I\subset\N$ with $N_{\min I}^M\sum_{i\in I}a_i^M\in\N$ the vector $y=\sum_{i\in I}a_i^Mx_i^M$ is a restriction of an $(M,\e_{\min I}^M)$-average of some block sequence $(y_k^0)$ with $\norm[y_k^0]\leq 1$ and such that the following property holds:

(P) for every $k,i,l$ either  $x_{i}^{M}\preceq y_{k}^{l}$ or $x_{i}^{M}\succeq y_{k}^{l}$ or $x_{i}^{M}$ and $y_{k}^{l}$ are incomparable, where $(y_k^l)_{k,l}$ is the family of nodes of averaging tree of $y$.
 \el
\bp Let $\e_I=\e_{\min I}^M$. We represent $y=\sum_{i\in I}a_i^Mx_i^M$ as a restriction of an $(M,\e_I)$-average.  We construct inductively on $l=M,M-1,\dots,0$ an averaging tree $(y_k^l)_{l=0,k=1}^{M,K_l}$ with weights $(W_k^l)$ and coefficients $(c_k^l)$, where $y_k^l=1/W_k^l\sum_{s\in J_k^l}y_s^{l-1}$ and $c_k^l=\prod_{r>l: y_k^l\preceq y_{k_r}^r}\frac{1}{W_{k_r}^r}$, such that $y^M_1=y$ and the following is satisfied
 \bnum
 \item [(P$_0$)] $c_k^ly_k^l=\sum_{m\in A_k^l}a_m^0x_m^0$, $c_k^l=\sum_{m\in A_k^l}a_m^0$ for every $k$  and $l<M$,
 \item [(P$_1$)] for every $k,i,l$ either $x_{i}^{l}\preceq y_{k}^l$ or $x_{i}^{l}$ is incomparable with $y_{k}^l$,
 \item[(P$_2$)] for every $i,j,k,l$ either  $x_{i}^{j}\preceq y_{k}^{l}$ or $x_{i}^{j}\succeq y_{k}^{l}$ or $x_{i}^{j}$ and $y_{k}^{l}$ are incomparable,
 \item[(P$_3$)] for every $k,l$ we have $W_k^l=\min\{N_i^l:\ x_i^l\preceq y_k^l\}$.
 \enum
We allow one difference from the original definition: $\# J_1^M=L=N_{\min I}^M\sum_{i\in I}a_i^M$, not $W_1^M$, to occur, otherwise $\# J_k^l=W_k^l$ for any $l<M$.

We let $y_1^M=\sum_{i\in I}a_i^Mx_i^M=\sum_{m\in A}a_m^0x_m^0$, $c_1^{M}=1$, $A_1^M=A$ and $W_1^M=N_{\min I}^M\leq\minsupp y$. All properties (P$_0$)-(P$_3$) are obviously satisfied.

Assume we have $(y_k^l)_k$, $(W_k^l)_k$ and $(c_k^l)_k$ for some $M\geq l>2$ satisfying the above.

Fix  $k$ and consider $A_k^l$. Pick any $m\in A_k^l$. By (P$_1$) in inductive assumption we have $x_{i_r}^r\preceq y_{k_r}^r$ for any $l\leq r\leq M$, $i_r,k_r$ with $x_m^0\preceq x_{i_r}^r$ and $x_m^0\preceq y_{k_r}^r$. Therefore $N_{i_r}^r\geq W_{k_r}^r$ for any $l\leq r\leq M$, $i_r,k_r$ as above. By Remark \ref{aat1} and (P$_{3}$) we have
$$
a_m^0 =\prod_{r=1}^M\frac{1}{N_{i_r}^r}\leq \prod_{r=l}^M\frac{1}{N_{i_r}^r} \leq \prod_{r=l}^M\frac{1}{W_{k_r}^r}=\frac{c_k^l}{W_k^l}\,.
$$
Recall that all coefficients $a_m^0, c_k^l, 1/W_k^l$ are some powers of $1/2$ and $(a_m^0)_m$ is non-increasing. Moreover for $l<M$ we have $\sum_{m\in A_k^l}a_m^0=c_k^l$, hence we can split $A_k^l$ into $W_k^l$-many successive sets $(A_s^{l-1})_{s=1}^{W_k^l}$ such that for each $s$ we have
$$
\sum_{m\in A_s^{l-1}}a_m^0=\frac{c_k^l}{W_k^l}\,.
$$
In case $l=M$ we have $\sum_{m\in A_1^M}a_m^0=L/W_1^M$, hence we can split $A_1^M$ into $L$-many sets $(A_s^{M-1})_{s=1}^{L}$ such that for each $s$ we have
$$
\sum_{m\in A_s^{M-1}}a_m^0=\frac{c_1^M}{W_1^M}=\frac{1}{W_1^M}\,.
$$
We define then $(y_s^{l-1})_s$  and $(c_s^{l-1})_s$ by
$$
\frac{c_k^l}{W_k^l}y_s^{l-1}=\sum_{m\in A_s^{l-1}}a_m^0x_m^0, \ \ \ \ c_s^{l-1}=\frac{c_k^l}{W_k^l}\,.
$$
Hence obviously $y_k^l=1/W_k^l\sum_sy_s^{l-1}$. We let also $W_s^{l-1}=\min\{N_i^{l-1}:\ x_i^{l-1}\preceq y_s^{l-1}\}$ and thus we finish construction of vectors on level $l-1$ satisfying (P$_0$) and (P$_3$). 

Now we verify property (P$_1$). Notice that by property (P$_{1}$) on level $l$ for each $k$ we have $\supp y_k^l=\cup\{\supp x_i^l: \ x_i^l\preceq y_k^l\}=\cup\{\supp x_s^{l-1}: \ x_s^{l-1}\preceq y_k^l\}$.  In case $l<M$ by Remark \ref{aat1} and (P$_0$) for $l$ we have
$$
\sum_{r:\ y_r^{l-1}\preceq y_k^l}c_r^{l-1}=W_k^l\frac{c_k^l}{W_k^l}=c_k^l=\sum_{m\in A_k^l}a_m^0=\sum_{s:\ x_s^{l-1}\preceq y_k^l}a_s^{l-1}\,,
$$
and as in the construction each $a_s^{l-1}\leq c_k^l/W_k^l=c_r^{l-1}$. In case of $l=M$ we have
$$
\sum_{r:\ y_r^{M-1}\preceq y_k^M}c_r^{M-1}=L\frac{c_1^M}{W_1^M}=\frac{L}{W_k^l}=\sum_{m\in A_1^M}a_m^0=\sum_{s:\ x_s^{M-1}\preceq y_k^M}a_s^{M-1}\,,
$$
and each $a_s^{M-1}\leq 1/W_1^M=c_r^{M-1}$. Since all coefficients are the powers of $1/2$ and the sequence $(a_s^{l-1})_s$ is non-increasing we can partition the set $\{s: \ x_s^{l-1}\preceq y_k^l\}$ into $\cup\{B_r: \ y_r^{l-1}\preceq y_k^l\}$ such that for any $r$ we have $c_r^{l-1}=\sum_{s\in B_r}a_s^{l-1}$. Consequently for any $y_r^{l-1}\preceq y_k^l$ and $x_s^{l-1}\preceq y_k^l$ we have either $y_r^{l-1}\succeq x_s^{l-1}$ or $y_r^{l-1}$ and $x_s^{l-1}$ are incomparable.

The property (P$_2$) can be verified analogously by induction. If for some $l,k,j$ we have $\supp y_k^l=\cup\{\supp x_i^j: \ x_i^j\preceq y_k^l\}$, then we show that for any $y_r^{l-1}\preceq y_k^l$ and $x_s^{j-1}\preceq y_k^l$ we have either $y_r^{l-1}\succeq x_s^{j-1}$ or $y_r^{l-1}$ and $x_s^{j-1}$ are incomparable. The same argument works if $\supp x_i^j=\cup\{\supp y_k^l: \ x_i^j\succeq y_k^l\}$ for some $i,j,l$.

Define for each $l=M,\dots,1$ and $k=1,\dots,K_l$ the error $\delta_k^l$. For $k=1$ let $\delta_1^l=\e_I$, for any $l=M,\dots,1$. By property (P$_1$) for any $l,k$ there is some $i_k\geq k$ with
$$
\maxsupp y_{k}^l\leq\maxsupp x_{i_k}^l <\minsupp x_{i_k+1}^l\leq \minsupp y_{k+1}^l\,.
$$
Let $\delta_{k+1}^l=\e_{i_k+1}^l$ for any $k\geq 1$. We verify condition $(5)$ of Definition \ref{aat0}. For $k=1$ and $l=M,\dots,1$ we have $W_1^l\geq N_{\min I}^M\geq 2/\e_{\min I}^M=2/\delta_1^l$. On the other hand we have for any $l=M-1,\dots,1$ and $k=1,\dots,K_l-1$
$$
\delta_{k+1}^l=\e_{i_k+1}^l<1/2^{i_k}\maxsupp x_{i_{k}}^l\leq 1/2^k\maxsupp y_k^l\,,
$$
and $W_{k+1}^l\geq N_{i_k+1}^l>2/\e_{i_k+1}^l=2/\delta_{k+1}^l.$ 

Hence $(y_k^l)_{k,l}$, $(W_k^l)_{k,l}$, $(c_k^l)_{k,l}$, $(\delta_k^l)_{k,l}$ form an averaging tree and thus $y$ is $(M,\e_I)$-average of $(y_k^0)_k$.
Notice that
$$
\norm[c_k^0y_k^0]=\norm[\sum_{m\in A_k^0}a_m^0x_m^0]\leq \sum_{m\in A_k^0}a_m^0=c_k^0\,,
$$
therefore $\norm[y_k^0]\leq 1$. Moreover property (P$_{2}$) includes property (P). \end{proof}
\begin{remark}\label{ns2}
Note that by the construction each sequence $(y_s^{l-1})_{s\in J_k^l}$ is $\mc{S}_1$-admissible for any $k,l$. Hence it readily follows that for every set $F$ of incomparable nodes $(y_k^l)$ the functional $\sum_{y_k^l\in F}\theta^{M-l}e^{*}_{\minsupp y_k^l}$ is a norming functional on the space $T[\mc{S}_{1},\theta]$.
\end{remark}
The next Lemma provides a "Tsirelson-type" upper estimate for the norms of averages.
 \bl\label{theta1.7} Let $(x_i^j)$, $(N_i^j)$, $(a_i^j)$, $(\e_i^j)$ form an averaging tree for a $(2M-3,\e)$-average $x$, $M>1$, $\e>0$, of normalized block sequence $(x_i^0)_i$, satisfying additionally the following conditions:
\bnum
\item for any $i,j$ we have $N_i^j=2^{k_i^j}$ for some $k_i^j$,
\item for any $i,j$ we have $\e_{i+1}^j\leq\theta_M\e/2^i\maxsupp x_i^j$,
$\e_1^j\leq\theta_M\e/2$ for any $i,j$.
\enum
Fix an $\mc{S}_{M-4}$-allowable family $\mc{E}$ of subsets of $\N$, such that the family $\{E\in\mc{E}: \ Ex_i^M\neq 0\}$ is $\mc{S}_1$-allowable for any $i$,  and coefficients $(t_E)_{E\in\mc{E}}\subset [0,1]$.

Then there is a partition $(V_E)_{E\in\mc{E}}$ of nodes $(x_i^0)_i$, with $\minsupp x_{\min V_E}^0\geq \min E$, such that
$$
\sum_{E\in\mc{E}}t_E\norm[Ex]\leq C\sum_{E\in\mc{E}}t_E \norm[\sum_{i\in V_E}a_i^0e_{\minsupp x_i^0}]_{T[\mc{S}_1,\theta]}+C\e
$$
for some universal constant $C$ depending only on $\theta_1$ and $\theta$.
 \el
\bp
\textsf{STEP 1}. Let us recall that  $x$ is an $(M-3,\e)$-average of $(x_i^M)_i$.
First let $\mc{E}_i=\{E\in\mc{E}: \ E \text{ begins at } x_i^M\}$ and $J=\{i:\ \mc{E}_i\neq\emptyset\}$. As $(x_i^M)_{i\in J}$ is $\mc{S}_{M-4}$-admissible, we have
\begin{align*}
\sum_{E\in\mc{E}}t_E\norm[E\sum_{i\in J}a_i^Mx_i^M]\leq \sum_{i\in J}a_i^M\sum_{E\in\mc{E}}\norm[Ex_i^M]\leq\theta_1^{-1}\sum_{i\in J}a_i^M\leq\theta_1^{-1}2\e.
\end{align*}
For any $E\in\mc{E}$ let $I_E=\{i\not\in J: \ Ex_i^M\neq 0\}$, $i_E=\min I_E$ and $\e_E=\e^M_{i_E}$. Compute
\begin{align*}
\sum_{E\in\mc{E}}\e_E&\leq \e\theta_M\sum_{i\in J}\sum_{E\in\mc{E}_i}1/2^{i_E-1}\maxsupp x_{i_E-1}^M\\
&\leq \e\theta_M\sum_{i\in J}\maxsupp x_i^M/2^{i}\maxsupp x_{i}^M\leq \e\theta_M.
\end{align*}
\textsf{STEP 2}. Fix $E\in\mc{E}$.
Let $\sum_{i\in I_E}a_i^Mx_i^M=\sum_{m\in K}a_m^0x_m^0$. Notice that each $a_m^0\leq 1/N_{i_E}^M$ and $(a_m^0)_m$ is non-increasing, therefore we can partition $K$ into intervals $A<B$ with $\sum_{m\in A}a_m^0=L/N_{i_E}^M$ and $\sum_{m\in B}a_m^0=\delta/N_{i_E}^M$ for some $L\in\N$ and $0\leq \delta<1$. Hence we can erase $\sum_{m\in B}a_m^0x_m^0$ with error $\delta/N_{i_E}^M\leq 1/N_{i_E}^M\leq \e_E$.

After this reduction by Lemma \ref{theta1.5} the vector $y=\sum_{i\in I_E}a_i^Mx_i^M$ is a restriction of an $(M-2,\e_E)$-average $\sum_kc_k^2y_k^2$ with $\norm[y_k^2]\leq 1$ and property (P) given by a suitable averaging tree $(y_k^l)_{k,l}$ with proper weights, coefficients and errors. 

We take the family $K=\{k:$  $\minsupp x_{i}^{M}\in\ran y_{k}^{2}$ for some $x_{i}^{M}\}$. 
Since $(x_i^M)_i$ is an $\mc{S}_{M-3}$-admissible family and $y$ is an $(M-2,\e_E)$-average of $(y_k^2)$, we can erase $\sum_{k\in K}c_k^2y_k^2$ with error $2\e_E$.
For any $i$ let 
$$
l_{E,i}=\min\{M\geq l\geq 0:  y_k^l\succeq  x_i^M \}.
$$
By the above reduction and (P) we can assume that $l_{E,i}\geq 2$ for all $i\in I_E$. Let 
\begin{center}
\mbox{ $K_{E,i}=\{k:  y_k^2\preceq x_i^M\}$ for any $i\in I_E$.}
\end{center}
Compute by Lemma \ref{theta1} for the $(M-2,\e_E)$-average $\sum_{k}c_{k}^2y_{k}^{2}$ and $j=0$
\begin{align*}
\norm[Ex]=\norm[E\sum_{k}c_{k}^2y_{k}^{2}] & \leq\norm[\sum_{k\not\in K}c_k^2Ey_k^2]+2\e_E\\
&\leq \theta_1^{-1}\theta^{M-3}\sum_{i\in I_E}\sum_{k\in K_{E,i}}c_k^2\norm[Ey_k^2]+6\e_E/\theta_M\\
& =\theta_1^{-1}\theta^{M-3}\sum_{i\in I_E}a_i^M\sum_{k\in K_{E,i}}\norm[\frac{c_k^2}{a_i^M}Ey_k^2]+6\e_E/\theta_M
.
\end{align*}
\textsf{STEP 3}. Fix $i\not\in J$. Put $\mc{F}_i=\{E\in\mc{E}: \ i\in I_E\}=\{E\in\mc{E}: Ex_i^M\neq 0\}$. For any $E\in\mc{F}_i$ and $k\in K_{E,i}$ let $w_k=\frac{c_k^2}{a_i^M}Ey_k^2$. For each $k\in K_{E,i}$ take the norming functional $f_k$ with $f_k(w_k)=\norm[w_k]$ and $\supp f_{k}\subset\supp w_{k}.$

We gather all the terminal nodes in the tree-analysis of $f_k$ for all $k\in K_{E,i}$, $E\in\mc{F}_i$, of order smaller than $M-{l_{E,i}}$. By the assumption on $\mc{E}$ and the fact that $l_{E,i}\geq 2$ they form an $\mc{S}_{M-1}$-allowable family, hence as $x_i^M$ is an $(M,\e_i^M)$-average, we can erase these nodes with total error $2\e_i^M$.

By Fact \ref{f2}, adding nodes in the tree-analysis of each $f_k$, $k\in K_{E,i}$, on the level $M-l_{E,i}$, we get $\norm[w_k]\leq \theta^{M-l_{E,i}}\sum_lf_k^l(x_i^M)$ for some $\mc{S}_{M-l_{E,i}}$-allowable functionals $(f_{k}^{l})_{l}$. Pick $E_i$ with $t_{E_i}\theta^{-l_{E_i,i}}=\max\{t_{E}\theta^{-l_{E,i}}: E\in\mc{E}\}$. Let $l_i=l_{E_i,i}$ and compute
\begin{align*}
\sum_{E\in\mc{F}_i}t_E \sum_{k\in K_{E,i}}\norm[\frac{c_k^2}{a_i^M}Ey_k^2] &\leq\sum_{E\in\mc{F}_i}t_E\theta^{M-l_{E,i}}\sum_{k\in K_{E,i}}\sum_{l}f_k^l(x_i^M)+2\e_i^M\leq \dots
\end{align*}
Notice again that $(f_k^l)_{l,k\in K_{E,i}, E\in\mc{E}}$ is an $\mc{S}_{M-1}$-allowable family (as before by $l_{E,i}\geq 2$ and assumption on $\mc{E}$). As $x_i^M$ is an $(M,\e_i^M)$-average of suitable $(x_m^0)_m$, by Fact \ref{f1} with error $2\e_i^M/\theta_M$, we may assume that for any $m$ the family $(\supp f_k^l\cap\supp x_m^0)_{l,k\in K_{E,i}, E\in \mc{E}}$ is $\mc{S}_1$-allowable.  Therefore we continue the estimation
\begin{align*}
\dots &\leq t_{E_i}\theta^{M-l_i}\sum_{E\in\mc{F}_i}\sum_{k\in K_{E,i}}\sum_lf_k^l(x_i^M)+4\e_i^M/\theta_M\leq \theta_1^{-1}t_{E_i}\theta^{M-l_i}+4\e_i^M/\theta_M\,.
\end{align*}
\textsf{STEP 4}. We define $J_E=\{i:\ E=E_i\}\subset I_E$ for any $E\in\mc{E}$. Notice that $(J_E)_{E\in\mc{E}}$ are pairwise disjoint.  By STEP 1, STEP 2 and STEP 3 we have
\begin{align*}
\sum_{E\in\mc{E}}t_E\norm[Ex] & \leq \sum_{E\in\mc{E}}t_E\norm[E \sum_{i\in J}a_i^Mx_i^M]+\sum_{E\in\mc{E}}t_E\norm[E\sum_{i\in I_E}a_i^Mx_i^M]\\
& \leq 2\theta_1^{-1}\e+\theta_1^{-1}\theta^{M-3}\sum_{E\in\mc{E}}\sum_{i\in I_E}a_i^M\sum_{k\in K_{E,i}}t_E\norm[\frac{c_k^2}{a_i^M}Ey_k^2]+6\sum_{E\in\mc{E}}\e_E/\theta_M\\
& = \theta_1^{-1}\theta^{M-3}\sum_{i\not\in J}a_i^M\sum_{E\in\mc{F}_i}\sum_{k\in K_{E,i}}t_E\norm[\frac{c_k^2}{a_i^M}Ey_k^2]+(6+2\theta_1^{-1})\e \\
 &\leq \theta_1^{-2}\theta^{M-3}\sum_{i\not\in J}a_i^Mt_{E_i}\theta^{M-l_i}+4\sum_{i}\e_i^M/\theta_M+(6+2\theta_1^{-1})\e\\
&\leq \theta^{-2}_1\theta^{M-3}\sum_{E\in\mc{E}}t_E\sum_{i\in J_E}a_i^M\theta^{M-l_i}+(10+2\theta_1^{-1})\e \leq\dots
\end{align*}
Fix $E\in\mc{E}$.  Notice that for any $l$ the sequence $(x_i^M)_{x_i^M\preceq y_k^l,l=l_{i}}$ is $\mc{S}_1$-admissible, hence by  Remark~\ref{ns2} the formula $\sum_{i\in I}\theta^{M-l_{i}+1}e^*_{\minsupp x_i^M}$ defines a norming functional in $T[\mc{S}_1,\theta]$.  Therefore for any $E\in\mc{E}$ we have
$$
\sum_{i\in J_E}a_i^M\theta^{M-l_i}\leq\theta^{-1}\norm[\sum_{i\in J_E}a_i^Me_{\minsupp x_i^M}]_{T[\mc{S}_1,\theta]}
\,,
$$ 
and  we continue the above estimation
\begin{align*}
\dots& \leq \theta^{-2}_1\theta^{M-4}\sum_{E\in\mc{E}}t_E\norm[\sum_{i\in J_E}a_i^Me_{\minsupp x_i^M}]_{T[\mc{S}_1,\theta]}+(10+2\theta_1^{-1})\e\leq\dots
\end{align*}
Consider $z_{i}^M=1/a_i^M\sum_{x_m^0\preceq x_i^M}a_m^0e_{\minsupp x_m^0}$, for $i=1,\dots,N^M$, which are $(M,\e_i^M)$-averages in $T[\mc{S}_1,\theta]$ by Remark \ref{aat1}. 
As $\norm[z_{i}^M]_{T[\mc{S}_1,\theta]}\geq \theta^M$ for each $i$, we continue
\begin{align*}
\dots & \leq\theta^{-2}_1\theta^{-4}\sum_{E\in\mc{E}}t_E\norm[\sum_{i\in J_E}a_i^Mz_{i}^M]_{T[\mc{S}_1,\theta]}+(10+2\theta^{-1}_1)\e\\
& \leq\theta^{-2}_1\theta^{-4}\sum_{E\in\mc{E}}t_E\norm[\sum_{i\in J_E}\sum_{x_m^0\preceq x_i^M}a_m^0e_{\minsupp x_m^0}]_{T[\mc{S}_1,\theta]}+C\e\,,
\end{align*}
 which ends the proof with $C=10+2\theta^{-2}_1\theta^{-4}$ and $V_E=\{m: x_m^0\preceq x_i^M, i\in J_E\}$ for each $E\in\mc{E}$. 
\ep
\subsection{Special types of averages}
We present the lower "Tsirelson-type" estimate in a regular modified mixed Tsirelson space $X$ with $(\clubsuit)$. In order to achieve this we need special types of averages. We start with Corollary 4.10  \cite{mp} recalled below

\bpr\label{theta2.5} For any block subspace $Y$ of $X$, any $M\in\N$ and $\e>0$, there is an $(M,\e)$-average $x\in Y$ of some normalized block sequence in $Y$ such that 
$$
\theta^{M-j}D\geq\sup\left\{\sum_{i}\nrm{E_ix}:\ \mc{S}_j\text{-allowable }(E_i)\right\}\geq \theta^{M-j}/D
$$
for any $0\leq j\leq M$ and some universal constant $D$ depending only on $\theta_1$ and $\theta$. 
\epr
\bp We recall Lemma 4.9 \cite{mp}, whose proof is valid, line after line, also in the modified case. Lemma 4.9 \cite{mp} and Lemma \ref{theta1} yield the Proposition. 
\ep
\bd A special $(M,\e)$-average $x$,  $M\in\N$, $\e>0$, is any $(M,\e)$-average satisfying assertion of Proposition \ref{theta2.5}.
\ed
For the next lemma we shall need the following observation.
\bfa\label{height} Fix $M\in\N$. Then for any $G\in\mc{S}_M$ and any $z=\sum_{i\in G}a_ie_i\in T[\mc{S}_1,\theta]$, $(a_i)_{i\in G}\subset [0,1]$, there is a norming functional $f$ with a tree-analysis with height at most $M$, such that $\norm[z]_{T[\mc{S}_1,\theta]}\leq 2f(z)$.
\efa 
\bp
Take a norming functional $g$ with a tree-analysis $(g_t)_{t\in T}$ satisfying $g(z)=\norm[z]_{T[\mc{S}_1,\theta]}$. Let $I$ be the set of all terminal nodes of $\mt$ with order at most $M$ and let  $g_1$ be the restriction of $g$ to $I$ and $g_2=g-g_1$. If $g_1(z)\geq g_2(z)$ then we let $f=g_1$. Assume that $g_1(z)\leq g_2(z)$ and compute
$$
 g(z)\leq 2g_2(z)\leq 2\theta^{M+1}\sum_{i\in G\setminus I}a_i\leq 2\theta^M\sum_{i\in G}a_i=2f(z)\,,
$$
where $f=\theta^M\sum_{i\in G}e^*_i$, which ends the proof.
\ep
The major obstacle in obtaining the lower "Tsirelson-type" estimate for norm is the fact that given an $(M,\e)$-average $x=\sum_{i\in F}a_ix_i$ we do not control the norm of $\sum_{i\in G}a_ix_i$, $G\subset F$, in general case. The next result provides a block sequence $(x_i)$ whose any $\mc{S}_M$-admissible subsequence dominates suitable subsequence of the basis in the original Tsirelson space.  
\begin{lemma}\label{theta3}
For every block subspace $Y$ and every  $M\in\N$, $\delta>0$, there exists a block sequence  $(x_i)$ of $Y$ satisfying for any $G\in \mc{S}_{M}$ and scalars $(a_{i})_{i\in G}$
\begin{equation}\label{thetae}
\norm[\sum_{i\in G}a_{i}x_{i}] \geq \frac{1}{2}(1-\delta) \norm[\sum_{i\in G}a_{i}\norm[x_{i}]e_{\minsupp x_i}]_{T[\mc{S}_{1},\theta]}.
\end{equation}
\end{lemma}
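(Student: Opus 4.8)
The plan is to take $(x_i)$ to be a block sequence of special averages of rapidly increasing lengths, and then, for a fixed $G$ and fixed scalars, to build inside $X$ a norming functional imitating the low-complexity norming functional of $T[\mc{S}_1,\theta]$ that computes the right-hand side of \eqref{thetae}. Since \eqref{thetae} is homogeneous and, by the convention of this section, the coefficients may be taken non-negative, after rescaling we may assume $\nrm{x_i}=1$, so the $\nrm{x_i}$ factors disappear and the statement becomes that a normalized block sequence of special averages dominates, on index sets from $\mc{S}_M$, the corresponding subsequence of the basis of $T[\mc{S}_1,\theta]$.

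\emph{Construction of $(x_i)$.} First I fix sequences $M_1<M_2<\cdots\to\infty$ and $\e_1>\e_2>\cdots\to 0$ subject to two requirements: the $\e_i$ decrease fast enough to absorb all errors produced below (so each $\e_i$ is small with respect to $\theta_{M_i}$ and to the supports built at earlier steps), and the $M_i$ are chosen within a set of integers of density one along which $\theta_{M_i}/\theta_{M_i-k}\ge(1-\delta)\theta^k$ for every $k\le M$; this is possible because $(\clubsuit)$ already gives $\theta_{M_i}/\theta_{M_i-k}\le\theta^k$, while $\theta_n^{1/n}\to\theta$ forces $\theta_{M}/\theta_{M-k}\to\theta^k$ along a set of $M$'s of density one. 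Using Proposition \ref{theta2.5} I then construct recursively in $Y$ a normalized block sequence $(x_i)$ with $\minsupp x_i\ge i$ and $\maxsupp x_{i-1}<\minsupp x_i$, each $x_i$ being a rescaled special $(M_i,\e_i)$-average of a normalized block sequence of $Y$; for each $i$ and $0\le j\le M$ Proposition \ref{theta2.5} supplies an $\mc{S}_j$-allowable family of sets on which the norms of the corresponding pieces of $x_i$ add up to about $\theta^{-j}$, and $x_i$ carries a norming functional of $X$ whose tree-analysis refines a top piece of its averaging tree.

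\emph{The estimate.} Fix $G\in\mc{S}_M$ and non-negative $(a_i)_{i\in G}$. By the spreading property of $\mc{S}_M$ together with $\minsupp x_i\ge i$, the set $\{\minsupp x_i:i\in G\}$ belongs to $\mc{S}_M$, so Fact \ref{height} yields a norming functional $f$ of $T[\mc{S}_1,\theta]$ with a tree-analysis of height at most $M$ and $\nrm{\sum_{i\in G}a_ie_{\minsupp x_i}}_{T[\mc{S}_1,\theta]}\le 2f(\sum_{i\in G}a_ie_{\minsupp x_i})$. Since $T[\mc{S}_1,\theta]$ has a single weight $\theta$, the functional $f$ has the form $\sum_t\theta^{d_t}e^*_{\minsupp x_{i_t}}$ with $d_t\le M$ (the depth of the corresponding leaf), and the leaves of $f$, with the tree order, form an $\mc{S}_M$-admissible family. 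It now suffices to produce $g$ in the norming set of $X$ with $g(\sum_{i\in G}a_ix_i)\ge(1-\delta)f(\sum_{i\in G}a_ie_{\minsupp x_i})$, for then $\nrm{\sum_{i\in G}a_ix_i}\ge g(\sum_{i\in G}a_ix_i)\ge\frac12(1-\delta)\nrm{\sum_{i\in G}a_ie_{\minsupp x_i}}_{T[\mc{S}_1,\theta]}$, which is \eqref{thetae} after reinstating the $\nrm{x_i}$'s. One builds $g$ from the tree of $f$: under the leaf of $f$ that carries the coordinate $\minsupp x_i$ at depth $d$ one grafts a suitable top piece of the averaging tree of $x_i$ together with a norming functional of $x_i$, and, since no operation of $X$ has weight $\theta$, one reorganizes the iterated $\mc{S}_1$-admissible, weight-$\theta$ operations of $f$ into genuine $\mc{S}_n$-allowable, weight-$\theta_n$ operations of $X$ by means of Fact \ref{f2} and $(\clubsuit)$; the largeness of the $M_i$ and their choice in the first step then ensure that the weight actually gained on $x_i$ is at least $(1-\delta)$ times the required $\theta^d$, Facts \ref{f1} and \ref{f3} together with the smallness of the $\e_i$ take care of the passage between allowable and admissible families and of the deletion of low-order nodes, and the successive supports make the resulting $g$ a bona fide member of the norming set of $X$.

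\emph{The main obstacle.} The delicate point is exactly this reorganization: every operation of $X$ has weight strictly below $\theta^n$ on an $\mc{S}_n$-family, so the tree of $f$ cannot be copied verbatim into $X$; one must exploit that each $x_i$ is a special average of huge length, so that its internal averaging structure, seen through $(\clubsuit)$ via Fact \ref{f2}, makes up the missing weight up to a factor $1-\delta$ once $M_i$ is large --- and this is precisely what forces the choice of the $M_i$ along a density-one set in the first step. Once $M_i$ and $\e_i$ are chosen to grow fast enough, keeping the accumulated errors (the factor $2$ from Fact \ref{height}, the constants in the special-average estimates of Proposition \ref{theta2.5}, the allowable-to-admissible reductions) within $1-\delta$ is routine.
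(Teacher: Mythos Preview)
Your direct approach cannot yield the constant $(1-\delta)/2$; this is a genuine gap, not a matter of bookkeeping.

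The heart of your argument is the ``reorganization'' of the $T[\mc{S}_1,\theta]$-tree of $f$ into a norming functional $g$ on $X$, so that $g$ evaluates each $x_i$ with accumulated weight close to $\theta^{d_i}$. But every operation available in $X$ has weight $\theta_n\le\theta^n$, and the tool you cite, Fact~\ref{f2}, goes the wrong way: it converts a $\theta_n$-operation into an upper bound by $\theta^k$, which is useful for \emph{upper} norm estimates, not for building a functional that \emph{achieves} a value near $\theta^{d_i}$. If you try to compensate by going inside each $x_i$ using the special-average structure of Proposition~\ref{theta2.5}, you can at best get, for an $\mc{S}_{M_i-d_i}$-allowable family $(E_l)$,
\[
\theta_{M_i}\sum_l\nrm{E_lx_i}\ \ge\ \frac{\theta_{M_i}}{\theta^{M_i}}\cdot\frac{\theta^{d_i}}{D^2},
\]
and neither factor can be pushed to $1-\delta$: the constant $D$ is universal (not small), and $\theta_L/\theta^L$ need not approach $1$ along any subsequence (e.g.\ $\theta_L=\theta^L/L$ satisfies $\theta_L^{1/L}\to\theta$ yet $\theta_L/\theta^L\to 0$). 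Your density-one choice of $M_i$ controls the \emph{ratios} $\theta_{M_i}/\theta_{M_i-k}$ for $k\le M$, but this does not touch either obstruction.

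The paper's proof is designed precisely to defeat the fixed constant $D^2$: it argues by contradiction, starting from special $(mM,\e)$-averages and iterating the assumed failure of \eqref{thetae} $m$ times. The iteration raises the loss to $(1-\delta)^m$ while $D^2$ stays fixed, and since $\theta_{mM}^{1/m}\to\theta^M$ one can choose $m$ so large that $\theta_{mM}>(1-\delta)^mD^2\theta^{mM}$, contradicting the inequality obtained at the end. It is this amplification trick, absent from your sketch, that turns the crude special-average bounds into the sharp constant $(1-\delta)/2$.
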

\begin{proof}
Assume the contrary. Notice first that for any $M\in\N$ we have
$$
(\sqrt[m]{\theta_m})^M\leq \sqrt[m]{\theta_{Mm}}\leq\sqrt[m]{\theta^{mM}}\,,
$$
thus $\lim_{m\to\infty}\sqrt[m]{\theta_{Mm}}=\theta^M$. Pick $m\in\N$ such that $\sqrt[m]{\theta_{Mm}}>\sqrt[m]{D^2}(1-\delta)\theta^{M}$ with $D$  as in Prop. \ref{theta2.5}. Take a block sequence $(x_{i}^0)_{i}$ of special $(Mm,\e)$-averages, for some $\e>0$.

Since \eqref{thetae} fails there is an infinite sequence $G_{k_1}^{1}$ of successive elements of $\mc{S}_{M}$ and coefficients $(a_{i}^{1})_{i\in G_{k_1}^{1}}$ such that 
$$
\norm[\sum_{i\in G_{k_1}^{1}}a_{i}^{1}x_{i}^0]<\frac{1}{2}(1-\delta) \norm[\sum_{i\in G_{k_1}^1}a^{1}_{i}\norm[x_{i}^0]e_{m^0_{i}}]_{T[\mc{S}_{1},\theta]},
$$
where $m_{i}^0=\minsupp x_i^0$ for each $i$. Set $x_{k_1}^{1}=\sum_{i\in G_{k_1}^{1}}a_{i}^{1}x^0_{i}$, $k_1\in\N$, and by Fact \ref{height} take  norming functionals $f^1_{k_1}$ of the space $T[\mc{S}_{1},\theta]$ of height at most $M$ with
$$
\norm[\sum_{i\in G_{k_1}^1 } a^{1}_{i}\norm[x_{i}^0]e_{m_{i}^0}]_{T[\mc{S}_{1},\theta]} \leq 2f_{k_1}^{1}\left(\sum_{i\in G_{k_1}^1}a^{1}_{i}\norm[x_{i}^0]e_{m_{i}^0}\right).
$$
Assume that we have defined  $(x_{k_{j-1}}^{j-1})_{k_{j-1}}$ and $(f_{k_{j-1}}^{j-1})_{k_{j-1}}$ for some $j<m$. Then the failure of \eqref{thetae}  implies the existence of a sequence $(G_{k_j}^{j})_{k}$ of successive elements of $\mc{S}_{M}$ and a sequence $(a_{i}^{j})_{\in G_{k_{j}}^{j}}$ such that
$$
\norm[\sum_{i\in G_{k_{j}}^{j}}a_{i}^{j}x^{j-1}_{i}]<\frac{1}{2}(1-\delta) \norm[\sum_{i\in G_{k_{j}}^{j}}a^{j}_{i} \norm[x^{j-1}_{i}]e_{m_{i}^{j-1}}]_{T[\mc{S}_{1},\theta]}\,,
$$
where $m_{i}^{j-1}=\minsupp x_i^{j-1}$. Set $x_{k_j}^j=\sum_{i\in G_{k_{j}}^{j}}a_{i}^{j}x^{j-1}_{i}$, for $k_j\in\N$, and take norming trees $f_{k_{j}}^{j}$ of the space $T[\mc{S}_{1},\theta]$ of height at most $M$ such that
$$
\norm[\sum_{i\in G_{k_{j}}^{j}}a^{j}_{i}\norm[x_{i}^{j-1}]e_{m_{i}^{j-1}}]_{T[\mc{S}_{1},\theta]}\leq 2f_{k_{j}}^{j}\left(\sum_{i\in G_{k_{j}}^{j}} a^{j}_{i}\norm[x_{i}^{j-1}]e_{m_{i}^{j-1}}\right).
$$
The inductive construction ends once we get the vector $x_{1}^{m}$ and the functional $f_1^m$.

Each functional $f_{k_j}^j$ is of the form $\sum_{i\in G_{k_{j}}^{j}} \theta^{l^{j}_{i}}e^*_{m_{i}^{j-1}}$, by construction satisfying
$$
\norm[x_{k_j}^j]<(1-\delta)\sum_{i\in G_{k_{j}}^{j}} \theta^{l^{j}_{i}}a^{j}_{i}\norm[x_{i}^{j-1}]\,.
$$
Inductively, beginning from $f_{1}^{m}$ we produce a tree-analysis of some norming functional $f$ on $T[\mc{S}_{1},\theta]$ by substituting each terminal node $e^*_{m_{k}^{j}}$, $j=1,\dots,m$, by the tree-analysis of the functional $f_{k}^{j}$.

Put $G=\cup_{k_{m-1}\in G_{1}^m}\cup_{k_{m-2}\in G_{k_{m-1}^{m-1}}}\dots\cup_{k_1\in G_{k_2}^2}G_{k_1}^1$. Let $(l_i)_{i\in G}$ be such that $f=\sum_{i\in G}\theta^{l_i}e^*_{m_{i}^0}$. Notice that $l_i\leq mM$ for any $i\in G$, as the height of each $f_i^j$ does not exceed $M$. 
We compute the norm of $x_1^m$, which is of the form
$$
x_1^m=\sum_{k_{m-1}\in G_{1}^m}\sum_{k_{m-2}\in G_{k_{m-1}^{m-1}}}\dots\sum_{k_1\in G_{k_2}^2}\sum_{i\in G_{k_1}^1}a_{k_{m-1}}^{m}\dots a_i^1x_i^0=\sum_{i\in G}b_ix_i^0\,.
$$
 Since each $x_i^0$ is a special $(mM,\e)$-average,  for some $\mc{S}_{mM-l_i}$-allowable sequence $(E_l)_{l\in L_i}$ we have $\norm[x_i^0]\leq D^2\theta^{mM-l_i}\sum_{l\in L_i}\norm[E_lx_i^0]$.

We have on one hand by the above construction
\begin{align*}
\norm[x_1^m]&\leq (1-\delta)^m\sum_{i\in G}\theta^{l_i}b_i\norm[x_i^0] \\
& \leq (1-\delta)^mD^2\sum_{i\in G}\theta^{l_i}b_i\theta^{mM-l_i}\sum_{l\in L_i}\norm[E_lx_i^0]\\
&=(1-\delta)^mD^2\theta^{mM}\sum_{i\in G}b_i\sum_{l\in L_i}\norm[E_lx_i^0]\,.
\end{align*}
Notice that $(E_l)_{l\in\cup_{i\in G}L_i}$ is $\mc{S}_{mM}$-allowable by the definition of $f$ and $(l_i)_{i\in G}$, thus 
$$
\norm[x_1^m]\geq \theta_{mM}\sum_{i\in G}b_i\sum_{l\in L_i}\norm[E_lx_i^0]\,,
$$
which brings $\theta_{mM}\leq (1-\delta)^mD^2\theta^{mM}$, a contradiction with the choice of $m$.
\end{proof}
\bd A Tsirelson $(M,\e)$-average $x$,  $M\in\N$, $\e>0$, is an $(M,\e)$-average $x=\sum_{i\in F}a_ix_i$ of a normalized block sequence $(x_i)$ satisfying the assertion of the Lemma \ref{theta3} with $\delta=1/2$. 
\ed
\bd A RIS of (special, Tsirelson) averages is any block sequence of (special, Tsirelson) $(n_k,\e/2^k)$-averages $(x_k)$ for $\e>0$ and $(n_k)_k\subset \N$ satisfying
$$
\theta_{l_{k+1}}\norm[x_k]_{\ell_1}\leq \frac{\e}{2^{k+1}}, \ \ \ k\in\N\,,
$$
where $l_{k}=\max\{l\in\N: \ 4l\leq n_k\}$, $k\in\N$.
\ed
We need the following technical lemma, mostly reformulating Lemma 7, \cite{klmt}:
\begin{fact}\label{ave0} Take RIS of normalized averages $(x_k)$, for some $(n_k)\subset\N$ and $\e>0$, and some $x=\sum_kb_kx_k$ with $(b_k)\subset [0,1]$. Then for any norming functional $f$ with a tree-analysis $(f_\al)_{\al\in\mt}$ there is a subtree $\mt'$ such that the corresponding functional $f'$ defined by the tree-analysis $(f_\al)_{\al\in\mt'}$ satisfies $f(x)\leq f' (x)+3\e$ and the following holds for any $k$
\begin{enumerate}
 \item[(a)] any node $\al$ of $\mt'$ with $f_\al(x_k)\neq 0$ satisfies $\ord (\al)< n_{k+1}/4$, 
\item[(b)] any terminal node $\al$ of $\mt'$ with $f_\al(x_k)\neq 0$ satisfies $\ord(\al)\geq n_k$.
\end{enumerate}
\end{fact}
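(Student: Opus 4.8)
The plan is to pass from $\mt$ to $\mt'$ by two rounds of tree-surgery: a first round, exploiting the RIS inequality $\theta_{l_{k+1}}\norm[x_k]_{\ell_1}\le\e/2^{k+1}$, that forces (a), and a second round, exploiting the defining inequality of an $(n_k,\e/2^k)$-average, that forces (b). Since we work throughout with non-negative coefficients there is no cancellation, so for each $k$ with $f(x_k)\ne0$ the set $\{\al\in\mt:\ f_\al(x_k)\ne0\}$ is a subtree of $\mt$ containing the root, and $\ord$ increases strictly along branches. I shall use repeatedly that $\norm[g]\le1$ for every $g\in K$, that $Eg\in K$ for every $g\in K$ and $E\subset\N$ (by heredity and the spreading property of the $\mc{S}_n$, as in the definition of the norming set, whence restricting a functional at a node, or deleting a node, again yields a valid tree-analysis), that incomparable nodes of a tree-analysis have pairwise disjoint supports, and that $t(\al)\le\theta_{\ord(\al)}$ by regularity.

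\emph{Round 1 (property (a)).} Fix $k$ with $f(x_k)\ne0$ and let $B_k$ be the set of $\preceq$-minimal nodes $\al$ with $f_\al(x_k)\ne0$ and $\ord(\al)\ge n_{k+1}/4$. Since $\ord$ grows along branches and the nodes meeting $x_k$ form a subtree, every node meeting $x_k$ of order $\ge n_{k+1}/4$ lies on or below a member of $B_k$. The nodes of $B_k$ being pairwise incomparable, $(f_\al)_{\al\in B_k}$ have pairwise disjoint supports, so $\bigl\|\sum_{\al\in B_k}f_\al\bigr\|_\infty\le1$; and $\ord(\al)\ge l_{k+1}$ gives $t(\al)\le\theta_{l_{k+1}}$. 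Hence
\[
\sum_{\al\in B_k}t(\al)f_\al(x_k)\ \le\ \theta_{l_{k+1}}\Bigl(\sum_{\al\in B_k}f_\al\Bigr)(x_k)\ \le\ \theta_{l_{k+1}}\Bigl\|\sum_{\al\in B_k}f_\al\Bigr\|_\infty\,\norm[x_k]_{\ell_1}\ \le\ \frac{\e}{2^{k+1}}
\]
by the RIS condition. Restrict each $f_\al$, $\al\in B_k$, to $\N\setminus\ran x_k$: this deletes the $x_k$-part of these nodes and of all their descendants, so afterwards no node meets $x_k$ with order $\ge n_{k+1}/4$. Since the ranges $\ran x_k$ are pairwise disjoint, doing this for all $k$ at once changes $f$ only inside each $\ran x_k$ and removes at most $b_k\sum_{\al\in B_k}t(\al)f_\al(x_k)\le\e/2^{k+1}$ from $f(x)$ at the band $\ran x_k$; summed, at most $\e/2$.

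\emph{Round 2 (property (b)).} In the tree from Round~1, fix $k$ (if $x_k$ is no longer seen by the current functional there is nothing to do) and write $x_k=\sum_{i\in G}a_ix_i'$ for the underlying average. As in the proof of Fact~\ref{f3}, the family $\mc{E}_k$ of terminal nodes of order $<n_k$ is $\mc{S}_{n_k-1}$-allowable, so the set $G_k$ of indices $i\in G$ in which some $f_\al$, $\al\in\mc{E}_k$, begins satisfies $G_k\setminus\{\min G_k\}\in\mc{S}_{n_k-1}$, whence $\sum_{i\in G_k}a_i<2\e/2^k$. Let $D_k\subset\ran x_k$ be the set of min-supports of the members of $\mc{E}_k$ lying in $\ran x_k$; deleting those terminal nodes subtracts $D_kf$ from $f$, and $D_kf\in K$ with $\norm[D_kf]\le1$. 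Since a member of $\mc{E}_k$ whose min-support lies in $\supp x_i'$ begins in $x_i'$, only indices $i\in G_k$ contribute to $D_kf$, so $(D_kf)(x_k)=\sum_{i\in G}a_i(D_kf)(x_i')\le\sum_{i\in G_k}a_i<2\e/2^k$. Doing this for all $k$ at once (the bands again being disjoint) changes $f$ only inside the bands, removes at most $2\e/2^k$ from $f(x)$ at the band $\ran x_k$ — so at most $2\e$ in total — and, as it lowers no surviving terminal node's order and adds no node, leaves property (a) intact while installing (b): every terminal node of the final tree meeting $x_k$ has order $\ge n_k$.

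Writing $\mt'$, $f'$ for the result, we get $f'\in K$ with tree-analysis $(f_\al)_{\al\in\mt'}$, properties (a) and (b) for every $k$, and $f(x)\le f'(x)+\e/2+2\e<f'(x)+3\e$. The step I expect to be delicate is the bookkeeping of Round~2 — identifying ``order $<n_k$'' with $\mc{S}_{n_k-1}$-allowability of the relevant terminal nodes and passing from these nodes to the average-indices via the $\min$-deletion step — which is exactly the content of the reformulated Lemma~7 of \cite{klmt} and is, for a single average, already encapsulated in Fact~\ref{f3}; Round~1 is then soft once the RIS inequality and the $\norm[\cdot]_\infty\le1$ bound are available.
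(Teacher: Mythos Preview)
Your proof is correct and follows essentially the same approach as the paper's: Round~1 reproduces the paper's estimate $\sum_{\al\in\mc{F}_k}t(\al)f_\al(x_k)\le\theta_{l_{k+1}}\norm[x_k]_{\ell_1}\le\e/2^{k+1}$ (the paper cites this as the reasoning of Lemma~7 in \cite{klmt}), and Round~2 is precisely the application of Fact~\ref{f3} to each average $x_k$ with error $2\e_k=2\e/2^k$, which the paper invokes by name. Your error-accounting $\e/2+2\e<3\e$ is in fact slightly sharper than the paper's bookkeeping.
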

\bp In order to prove (a) we repeat the reasoning from the proof of Lemma 7 \cite{klmt}. For any $k$ let $\mc{F}_k$ be the collection of all nodes in $\mt$ which are minimal with respect to the property $\ord (\al)\geq n_{k+1}/4$ and $f_\al(x_k)\neq 0$. Then
$$
\sum_{\al\in\mc{F}_k}t(\al)f_\al(x_k)\leq \theta_{l_{k+1}}\norm[x_k]_{\ell_1}\leq \frac{\e}{2^{k+1}}\,.
$$
Thus we can erase all nodes from $\mc{F}_k$ restricted to supports of $x_k$, for all $k$, with error $\sum_kb_k\frac{\e}{2^{k+1}}\leq \e$.  

For (b) we use Fact \ref{f3} for erasing all terminal nodes $\al$ of $\mt$ with $f_\al(x_k)\neq 0$ with error $2\e_k$, for any $k$. 
\ep
\bl\label{ave1} Let $x=\sum_ka_kx_k$ be an $(M,\e)$-average of RIS of normalized special averages $(x_k)$, for $(n_k)\subset\{ M+3,M+4,\dots\}$ and $\e>0$, with $\e<\theta_M$.

Then $\norm[x]\leq D'\theta_M$, for some universal constant $D'$ depending only on $\theta$ and $\theta_1$. 
\el
\bp Take a norming functional $f$ with a tree-analysis $(f_\al)_{\al\in\mt}$ such that $\norm[x]=f(x)$. Using Fact  \ref{ave0} pick the subtree $\mt'$ satisfying (a) and (b) and the corresponding functional $f'$. 

Let $\mc{E}$ be collection of all $\al\in\mt'$ maximal with respect to the property $\ord (\al)\leq M-1$. Notice that $\mc{E}$ is $\mc{S}_{M-1}$ - allowable. 

Fix $\al\in\mc{E}$. Then $\al$ is not terminal, so $f_\al=\theta_{r_\al}\sum_{s\in \suc (\al)}f_s$. As in Fact \ref{f2} we  partition $\suc (\al)=\bigcup_{t\in A_\al}F_t$ in such a way that $(f_s)_{s\in F_t}$ is $\mc{S}_{\ord(s)-(M-1)}$-allowable for every $t\in A_\al$ and $(g_t)_{t\in A_\al}$ is $\mc{S}_{M-1-\ord(\al)}$-allowable, where $g_t=\sum_{s\in F_t}f_s$. Let $A=\cup_{\al\in\mc{E}}A_\al$ and notice that $(g_t)_{t\in A}$ is $\mc{S}_{M-1}$-allowable. Let $H$ denote the set of all $k$ such that some $g_t$, $t\in A$, begins in $x_k$. Since $x$ is an $(M,\e)$-average we have $\norm[\sum_{k\in H}a_kx_k]\leq\sum_{k\in H}a_k\leq 2\e$.

By definition of $H$ for any $\al\in\mc{E}$ and $k\not\in H$ with $f_\al(x_k)\neq 0$ there is an immediate successor of $\al$ beginning before $x_k$. Thus by (a)  we have for any $k\not\in H$
\begin{enumerate}
\item[(c)] for any $\al\in\mc{E}$ with $f_\al(x_k)\neq 0$ the order of immediate successors of $\al$ is at most $n_k/4$,
 \item[(d)]  $\{ g_t: \ t\in A,\ g_t(x_k)\neq 0\}$ restricted to $\supp x_k$ is $\mc{S}_1$ - allowable.
\end{enumerate}
Fix $k\not\in H$ and $t\in A$ with $g_t(x_k)\neq0$ and let $B_t^k=\{s\in F_t:\ f_s(x_k)\neq 0\}$.

Fix $s\in B_t^k$ and take the subtree $\mt_s$ of $\mt'$ consisting of $s$ (as a root) and of all successors of $s$ in $\mt'$. By Fact~\ref{f2}, using (b) and (c) we can add nodes in $\mt_s$ on level $n_k-\ord(s)$ obtaining  $(h_{s,r})_{r\in C_s}$ which is $\mc{S}_{n_k-\ord(s)}$-allowable satisfying 
$$
f_s(x_k)\leq \sum_{r\in C_s}\theta^{n_k-\ord(s)} h_{s,r}(x_k).
$$
Compute for $k\not\in  H$ using the above and $(\clubsuit)$
\begin{align*}
f'(x_k) & =\sum_{t\in A}\sum_{s\in B_t^k}t(s)f_s(x_k)\\
& \leq \theta_M\sum_{t\in A}\sum_{s\in B_t^k}\theta^{\ord(s)-M}\sum_{r\in C_s}\theta^{n_k-\ord(s)}h_{s,r}(x_k) \\
& \leq \theta_M\sum_{t\in A}\sum_{s\in B_t^k}\sum_{r\in C_s}\theta^{n_k-M}h_{s,r}(x_k)\,.
\end{align*}
Notice that the family $\{h_{s,r}:\ r\in C_s,\ s\in B_t^k\}$ for any fixed $t\in A, k\not\in  H$ is $\mc{S}_{n_k-M+1}$-allowable. Therefore by (d) the family $\{h_{s,r}:\ r\in C_s,\ s\in B_t^k, \ t\in A\}$ for any fixed $k\not\in  H$ is $\mc{S}_{n_k-M+2}$-allowable and hence since $x_k$ is a normalization of a $(n_k,\e_k)$-special average, we continue the estimation
$$
\dots\leq \theta_M\theta^{n_k-M}D^2\theta^{-n_k+M-2}=D^2\theta^{-2}\theta_M\,.
$$
We compute
$$
f(x)\leq f'(x)+3\e\leq\sum_{k\not\in H}a_k f( x_k)+5\e\leq D^2\theta^{-2}\theta_M+5\e\leq (D^2\theta^{-2}+5)\theta_M\,,
$$
which ends the proof of Lemma. 
\ep
\subsection{Main results}
\bt \label{dist} Let $X$ be a regular modified mixed Tsirelson space $T_M[(\mc{S}_n,\theta_n)_n]$. If $\theta_n/\theta^n\searrow 0$, then $X$ is arbitrary distortable. 
\et
\bp Theorem follows immediately from Proposition \ref{theta2.5} and Lemma \ref{ave1}. 
\ep
Recall that a Banach space $X$ with a basis is called sequentially minimal (\cite{fr}), if any block subspace of $X$ contains a block sequence $(x_n)$ such that every block subspace of $X$ contains a copy of a subsequence of $(x_n)$. Notice that this property implies quasiminimality of $X$.
\bt\label{quasi} Let $X$ be a  regular modified mixed Tsirelson space $T_M[(\mc{S}_n,\theta_n)_n]$. If $\theta_n/\theta^n\searrow$, then $X$ is sequentially minimal. 
\et
The theorem follows immediately from  the following result:
\bl\label{ris} Let $(x_k)_k$, $(y_k)_k$ be RIS of Tsirelson $(2M_k-3,\e_k)$-averages, $M_k>4$, $\e<(6C)^{-1}$, with $C$ as in Lemma \ref{theta1.7}, such that
 \bnum
 \item $x_k$ has an averaging tree $(x_{k,i}^j)_{i,j}$, $(N_{k,i}^j)_{i,j}$, $(\e_{k,i}^j)_{i,j}$,  $(a_{k,i}^j)_{i,j}$, $y_k$ has an averaging tree $(y_{k,i}^j)_{i,j}$, $(N_{k,i}^j)_{i,j}$, $(\e_{k,i}^j)_{i,j}$,  $(a_{k,i}^j)_{i,j}$, both satisfying conditions (1) and (2) of Lemma \ref{theta1.7} for any $k$, 
 \item $\minsupp x_{k,i}^0=\minsupp y_{k,i}^0$ and $\norm[x_{k,i}^0]=\norm[y_{k,i}^0]=1$ for any $k,i$,
 \item $\e_k\leq\theta_{2M_k-3}\theta^{2M_k-3}\e/2^{k+2}$ for any $k$.
 \enum
Then $(x_k/\norm[x_k])_k$ and $(y_k/\norm[y_k])_k$ are equivalent.
 \el
Notice first that Lemma above yields Theorem \ref{quasi}, as given a block sequence $(w_n)$ in $X$ and a block subspace $Y$ of $[w_n]$ and $k\in\N$, we can choose block sequences $(u_i)\subset [w_n]$ and $(v_i)\subset Y$ satisfying the assertion of Lemma \ref{theta3} for $2M_k-3$. Passing to a subsequences if necessary and using a small perturbations we obtain block sequences $(u_i')$ and $(v_i')$ of the form $u_i'=u_i+\de_ie_{m_{i}}$, $v'_i=v_i+\de_i e_{m_{i}}$, for some $(m_{i})\subset\N$ with $m_{i}=\minsupp u'_i=\minsupp v'_i$ for each $i$ and small $(\de_i)\subset (0,1)$, which are equivalent to $(u_i)$ and $(v_i)$ respectively and satisfy still the assertion of Lemma \ref{theta3} for $2M_k-3$. Then construct on these sequences two  Tsirelson $(2M_k-3,\e_k)$-averages with averaging trees as in Lemma \ref{ris} with equal systems of weights, errors and coefficients, obtaining $x_k$ and $y_k$. 

Now we proceed to the proof of Lemma \ref{ris}.
\bp Notice first that by Lemmas \ref{theta1} and \ref{theta3} we have estimation
$$
\theta^{2M_k-3}/4\leq\norm[x_k]\leq 5\theta_1^{-2}\theta^{2M_k-3}, \ \ \ \ k\in\N\,,
$$
and the same estimation for $\norm[y_k]$, $k\in\N$.

We show first that $(y_k/\norm[y_k])_k$ dominates $(x_k/\norm[x_k])_k$. Let $x=\sum_kd_kx_k/\norm[x_k]$ be of norm 1, with $(d_k)\subset [0,1]$, and take its norming functional $f$ with a tree-analysis $(f_\al)_{\al\in\mt}$. Let $y=\sum_kd_ky_k/\norm[y_k]$. By Fact \ref{ave0} we can assume with error $\e$ that $\ord(\al)< M_{k+1}/4\leq M_{k+1}-4$ for any $\al\in\mt$ with $f_\al(x_k)\neq 0$. For any $k>1$ let
$$
\mc{E}_k=\{\al\in\mt:\ f_\al \text{ begins at }x_k \text{ and has a sibling beginning before } x_k\}.
$$
By our reduction $\ord(\al)<M_k-4$ for any $\al\in \mc{E}_k$, $k\geq 2$. We replace in the tree-analysis of $f$ each functional $f_\al$, $\al\in \mc{E}_k$, by two functionals $g_\al=f_\al|_{\supp x_k}$ and $k_\al=f_\al-g_\al$, obtaining a tree-analysis of a functional $g$ on the space $X_2=T[(\mc{S}_n[\mc{A}_2],\theta_n)_n]$, which by  Lemma \ref{x2} is 3-isomorphic to $X$.

Notice that $(g_\al)_{\al\in\mc{E}_k,k\geq 2}$ have pairwise disjoint supports and $(\bigcup_{\al\in\mc{E}_k}\supp g_\al)\cap\supp x_k=\supp f\cap \supp x_k$, hence $f|_{\supp x_k}=\sum_{\al\in\mc{E}_k}t(\al)g_\al$. For each $k\geq 2$ consider the set $J_k=\{i: \text{ some }g_\al \text{ begins at } x_{k,i}^{M_k}\}$. Notice that by our reduction $(g_\al)_{\al\in\mc{E}_k}$ is $\mc{S}_{M_k-4}$-allowable, thus $(x_{k,i}^{M_k})_{i\in J_k}$ is $\mc{S}_{M_k-4}$-admissible and recall that $x_k$ is an $(M_k-3,\e_k)$-average of $(x_{k,i}^{M_k})$. Let $g_\al'$, $\al\in \mc{E}_k$, be the restriction of $g_\al$ to $\cup_{i\not\in J_k}\supp x_{k,i}^{M_k}$. Then we have the following estimation
\begin{align*}
f(x)&=\frac{d_1}{\norm[x_1]}f(x_1)+\sum_{k\geq 2}\frac{d_k}{\norm[x_k]}f(x_k)\\
&\leq\frac{d_1}{\norm[x_1]}f(x_1)+\sum_{k\geq 2}\frac{d_k}{\norm[x_k]}\sum_{\al\in\mc{E}_k}t(\al)g'_\al(x_k)+\sum_k\frac{d_k}{\norm[x_k]}\sum_{i\in J_k}a_{k,i}^{M_k}\norm[x_{k,i}^{M_k}]\\
&\leq\frac{d_1}{\norm[x_1]}f(x_1)+\sum_{k\geq 2}\frac{d_k}{\norm[x_k]}\sum_{\al\in\mc{E}_k}t(\al)g'_\al(x_k)+4\sum_k\e_k\theta^{-2M_k+3}\\
&\leq\frac{d_1}{\norm[x_1]}f(x_1)+\sum_{k\geq 2}\frac{d_k}{\norm[x_k]}\sum_{\al\in\mc{E}_k}t(\al)g'_\al(x_k)+\e\,.
\end{align*}
Fix $k\geq 2$. Notice that by definition the set $\{g'_\al:\ g'_\al(x_{k,i}^{M_k})\neq 0\}$ restricted to the support of $x_{k,i}^M$ is $\mc{S}_1$-allowable for any $i$. Therefore by Lemma \ref{theta1.7} we pick suitable partition $(V_{\al})_{\al\in\mc{E}_k}$ of nodes $(x_{k,i}^0)_i$ with $\minsupp x_{k,\min V_\al}^0\geq \minsupp g'_\al$ for each $\al\in\mc{E}_k$ and applying Lemma  \ref{theta3} we have
\begin{align*}
\sum_{\al\in\mc{E}_k}t(\al)g'_\al(x_k)&\leq C\sum_{\al\in\mc{E}_k}t(\al)\norm[\sum_{i\in V_{\al}}a^0_{k,i}e_{\minsupp x_{k,i}^0}]_{T[\mc{S}_1,\theta]}+C\e_k\\
 &\leq C\sum_{\al\in\mc{E}_k}t(\al)\norm[\sum_{i\in V_{\al}}a^0_{k,i}e_{\minsupp y_{k,i}^0}]_{T[\mc{S}_1,\theta]}+C\e_k\\
& \leq 2C\sum_{\al\in\mc{E}_k}t(\al)\norm[\sum_{i\in V_{\al}}a^0_{k,i}y_{k,i}^0]+C\e_k\\
& \leq 2C\sum_{\al\in\mc{E}_k}t(\al)h_\al (y_k)+C\e_k\,,
\end{align*}
where $h_\al$ is a norming functional on $X$ with $h_\al(y_k)=\norm[\sum_{i\in V_\al}a^0_{k,i}y_{k,i}^0]$ and $\minsupp h_\al\geq \minsupp x_{k,\min V_\al}^0\geq \minsupp g'_\al$ for each $\al\in \mc{E}_k$.

We modify the tree-analysis of $g$, replacing each node $g_\al$, $\al\in\mc{E}_k$, $k\geq 2$, by the functional $h_\al$. As $\minsupp h_\al\geq \minsupp g_\al$ for each $\al$, we obtain a tree-analysis of some norming functional $h$ on $X_2$. We compute, by Lemma \ref{x2} and above estimations including the estimation on the norms of $(x_k)_k$ and $(y_k)_k$,
\begin{align*}
1=f(x)&\leq d_1+\sum_{k\geq 2}\frac{d_k}{\norm[x_k]}\sum_{\al\in\mc{E}_k}t(\al)g_\al(x_k)+\e\\
& \leq d_1+40C\theta^{-2}\sum_{k\geq 2}\frac{d_k}{\norm[y_k]}\sum_{\al\in\mc{E}_k}t(\al)h_\al(y_k)+4C\sum_{k\geq 2}\frac{\e_k}{\theta_{2M_k-3}}+\e\\
&\leq d_1+40C\theta^{-2}h(\sum_{k\geq 2}\frac{d_k}{\norm[y_k]}y_k)+3C\e\\
&\leq 121C\theta^{-2}\norm[y]+1/2\,,
\end{align*}
which means that $(y_k/\norm[y_k])_k$ dominates $(x_k/\norm[x_k])_k$. Since the conditions are symmetric, the opposite domination follows analogously.
\ep
\section{Strictly singular non-compact operators}
\subsection{Spaces defined by families $(\mc{A}_n)_n$}
As in mixed Tsirelson spaces defined by Schreier families the crucial tool will  be formed by $\ell_p-$averages.
\bd A vector $x\in X$ is called a $C-\ell_r-$\textit{average} of length $m$, for $r\in
[1,\infty]$, $m\in\N$ and $C\geq 1$ if $x=\sum_{i=1}^mx_i/\nrm{\sum_{i=1}^mx_i}$ for some
normalized block sequence $(x_n)_{n=1}^m$ which is $C$-equivalent to the unit vector
basis of $\ell_r^m$.
\ed
\bd \cite{s2} Let $X$ be a Banach space with a basis $(e_n)$. Then $X$ is in 
\bnum
 \item Class 1, if every normalized block sequence in $X$ has a subsequence equivalent to some subsequence of $(e_n)$. 
 \item Class 2, if each block sequence has further normalized block sequences $(x_n)$ and $(y_n)$ such that the map $x_n\mapsto y_n$ extends to a bounded strictly singular operator between $[x_n]$ and $[y_n]$. 
\enum
\ed
T. Schlumprecht asked if any Banach space contains a subspace with a basis which is either of Class 1 or Class 2 and gave some sufficient condition (Thm. 1.1 \cite{s2}) for the existence of strictly singular non-compact operator in the space. 
\bt\cite{s2}\label{s2} Let $(x_n)$ and $(y_n)$ be two normalized basic sequences generating spreading models $(u_n)$ and $(v_n)$ respectively. Assume that $(u_n)$ is not equivalent to the u.v.b. of $c_0$ and $(u_n)$ strongly dominates $(v_n)$, i.e. 
$$
\norm[\sum_{i=1}^\infty a_iv_i]\leq \max_{n\in\N}\de_n\max_{\# F\leq n}\norm[\sum_{i\in F} a_iu_i]
$$
for some sequence $(\de_n)$ with $\de_n\searrow 0$, $n\to\infty$. Then the map $x_n\mapsto y_n$ extends to a bounded strictly singular operator between $[x_n]$ and $[y_n]$. 
\et
\bt\label{p-sp} Let $X=T[(\mc{A}_n, \frac{c_n}{n^{1/q}})_n]$ be a regular $p-$space, with $p\in [1,\infty)$. Then 
\bnum
 \item if $\inf_n c_n>0$, then $X$ is saturated with subspaces of Class 1.
 \item if $c_n \to 0$, $n\to\infty$, then $X$ is in Class 2. 
\enum
\et
\bp PART (1). We show that any block subspace of $X$ contains a normalized block sequence $(u_s)_s$ with the following "blocking principle": any normalized block sequence $(y_j)_j$ is equivalent to any $(u_{k_j})_j$, with $y_j<u_{k_{j+1}}$ and $u_{k_j}<y_{j+1}$.  
It follows that the subspace $[(u_s)]$ is sequentially minimal.. 

By Prop. 2.10 \cite{mp} any block subspace of $X$ contains an $\ell_p$-asymptotic subspace of $X$. Let $W$ be such $\ell_p$-asymptotic subspace, spanned by a normalized block sequence $(w_k)_k$. Let $C$ be the asymptotic constant of $W$, i.e. any normalized block sequence $(z_i)_{i=1}^n$ with $z_1>n$ in $W$ is $C$-equivalent to the u.v.b. of $\ell_p^n$.

For any block subspace $Y$ of $X$ spanned by normalized block sequence $(y_n)$ let $\norm[\sum_na_ny_n]_{Y,\infty}=\sup_{n\in\N}|a_n|$. 

Fix two strictly increasing sequences of integers $(m_n)_n\subset\N$ and $(N_j)_j\subset\N$ and take normalized block sequences $(v_n)_n$ of $(w_k)_k$ and $(u_j)_j$ of $(v_n)_n$ such that 
\bnum
 \item $v_n>m_n$ in $W$ for any $n$, 
 \item for any $y\in [(v_i)_{i>n}]$ we have $\norm[y]_{W,\infty}<1/(8m_n^5)$,  for any $n$, 
 \item $u_j>N_j$ in $V=[(v_n)_n]$ for any $j$, 
 \item for any $y\in [(u_i)_{i>j}]$ we have $\norm[y]_{V,\infty}<1/(8N_j^5)$, for any $j$, 
 \item $\sqrt[p]{N_j}\geq C2^{j+7}$ for any $j$
 \item $N_j\theta_{m_n}<1/2^{n+5}$ for any $n\geq j$ (in particular $m_n\geq N_j$ for any $n\geq j$)
 \item $\theta_{m_n}\sum_{i<n}\# \supp v_i<1/2^{n+5}$ for any $n$
\enum
Notice that every vector $y\in [(v_i)_{i>n}]$ is an $2C-\ell_p$-average of length $m_n$ of some normalized block sequence $(y_i)_{i=1}^{m_n}$ of $(w_k)_k$.  Indeed, by Claim 3.8 \cite{mp} and condition (2)  split $y$ into $(Fy_i)_{i=1}^{m_n}$ with almost equal norm and obtaining by condition (1) and  $\ell_p$-asymptoticity of $W$ that $y$ is a suitable average. The same holds in $V$: every vector $y\in [(u_i)_{i>j}]$ is an $2C-\ell_p$-average of length $N_j$ of some normalized block sequence $(y_i)_{i=1}^{N_j}$ (block with respect to $(v_n)_n$).

We show that under such conditions we can prove the above Theorem repeating the proof of Theorem 3.1 \cite{mp}. We consider any normalized block sequence $(y_j)$ of $(u_j)$ and as $(z_j)$ we take $(u_{k_j})$ with $y_j<u_{k_{j+1}}$ and $u_{k_j}<y_{j+1}$. By the above observation $y_j=(y^j_1+\dots+y^j_{N_j})/\nrm{y^j_1+\dots+y^j_{N_j}}$ and $u_{k_j}=(u^j_1+\dots+u^j_{N_j})/\nrm{u^j_1+\dots+u^j_{N_j}}$, where $(y_j^i)_{i=1}^{N_j}$ and $(u_j^i)_{i=1}^{N_j}$ are normalized block sequences with respect to $(v_j)_j$. Notice that $(N_j)$ are big enough by condition (5). We again use the above observation obtaining that each $y_j^i$ and $v_j^i$ is an $\ell_p$-average of a block sequence, of $(w_k)_k$, of suitable length with parameters satisfying the assertion of a version of Lemma 3.2 \cite{mp} for  $C$-averages instead of 2-averages (by conditions (6) and (7)). Therefore  repeating the proof of Theorem 3.1 \cite{mp} we obtain uniform equivalence of $(y_j)$ and $(u_{k_j})$ and hence "blocking principle" stated above. 

PART (2). Fix  a block subspace $Y$ of $X$. By Theorem 2.9 \cite{mp} $p$ is in Krivine set of $Y$. 
Take finite normalized block sequences $(y_{i})_i$ such that for some $(m_{i})_i\subset\N$ 
\bnum
 \item each $y_i$ is $2-\ell_p-$averages of length $N_i\geq (2m_i)^p$,
 \item $\theta_{m_i}\sum_{j<n}\# \supp y_j\leq 1/2^{i+5}$ for any $i$, 
 \item $2^{i+5}\theta_{m_i}\to 0$, $i\to \infty$.
\enum
Passing to a subsequence we can assume that $(y_i)$ generates a spreading model $(v_i)$. 
\bl \label{l3.5} The spreading model $(v_i)$ is strongly dominated by the u.v.b. of $\ell_p$. 
\el
\bp  Take $k\in \N$ and $(a_i)_{i=1}^N\in c_{00}$ with $\norm[(a_i)]_\infty\leq 1/k^2$ and $\norm[(a_i)]_{\ell_p}=1$. 
Choose $M$ by (3) in definition of $(y_{i})$ with  $N\theta_{m_{i+M}}\leq 1/2^{i+M+5}$ for any $i$ and $1/2^M\leq 1/k$. We have $\norm[\sum_{i=1}^Na_iv_i]\leq 2\norm[\sum_{i=1+M}^{N+M}\tilde{a}_iy_{i}]$, where $\tilde{a}_{i+M}=a_i$, $i=1,\dots,N$.

Take a norming functional $f$ with a tree-analysis $(f_t)_{t\in \mt}$ and $\supp f\subset\supp y$, where $y=\sum_{i=1+M}^{N+M}\tilde{a}_iy_{i}$. By Lemma 2.5 \cite{mp} up to multiplying by 36 we can assume that for any $f_t$ and  $y_{i}$ we have either  $\supp f_t\subset y_{i}$, $\supp f_t\supset \supp y_{i}\cap\supp f$ or $\supp f_t\cap\supp y_{i}=\emptyset$. We say that $f_t$ covers $y_{i}$, if $t$ is maximal in $\mt$ with $\supp f_t\supset\supp y_{i}\cap \supp f$.

Let $A=\{t\in T:\ f_t$ covers some $y_{i}$\}. Given any $t\in A$ let $I_t=\{ i=1+M,\dots,N+M:\ f_t$ covers $y_{i}\}$. Let $\theta_{m_t}$ be the weight of $f_t$. If $m_t>m_{i}$ for  some $i\in I_t$ let $i_t$ be the maximal element of $I_{t}$ with this property. Otherwise let $i_t=0$. 

For any $i\in I_t$ let $J_i=\{s\in\suc (t):\ \supp f_s\subset \supp y_{i}\}$. By Lemma 2.8 \cite{mp} we have $\sum_{s\in J_i}f_s(y_{i})\leq 8 (\# J_i)^{1/q}$ for each $i\in I_t, i>i_t$.

First let $L_t=\{i\not\in I_t:\ \supp y_{i}\cap \supp f\subset \supp f_t\}$. Notice that for any $i\in L_t$ there is some $f_{t_i}$ - successor of $f_t$ so that $\supp y_{i}\cap \supp f\subset \supp f_{t_i}$. Hence
$$
f_t(\sum_{i\in L_t}\tilde{a}_iy_{i})\leq \theta_{m_{i_t}}(\sum_{i\in L_t}f_{t_i}(\tilde{a}_iy_{i}))\leq N\theta_{m_{i_t}}\leq 1/2^{i_t+2}\,.
$$
Thus $f(\sum_{t\in A,i\in L_t}y_{i})\leq 1/2^M$ and we erase this part for all $t$ with error $\leq 1/k$.
Notice that by condition (2) in choice of $(y_i)$ we have
$$
f_t(\sum_{i\in I_t, i<i_t}y_i)\leq \theta_{m_{i_t}}\sum_{i<i_t}\# \supp y_{i} \leq 1/ 2^{i_t+2}\,, 
$$
so we can again erase this part for all $t$ with error $1/k$.

Let $g$ be the restriction of $f$ to $\cup_{t\in A}\supp y_{i_t}$ and $h=f-g$. First we consider $g(y)=\sum_{t\in A}t(f_t) \tilde{a}_{i_t}f_t(y_{i_t})$. Let $B=\{t\in A:\  \ord(f_t)\leq k\}$, hence $\# B\leq k$. Then $\sum_{t\in B}\tilde{a}_{i_t}f_t(y_{i_t})\leq \# B/k^2\leq 1/k $, hence we can erase this part with error $1/k$. Notice that $\sum_{t\in A\setminus B}\frac{1}{\ord(f_t)^{1/q}}e^*_{i_t}$ is a norming functional on $\ell_p$, hence 
$$
\sum_{t\in A\setminus B}\tilde{a}_{i_t}t(f_t)f_t(y_{i_t})\leq \sum_{t\in A\setminus B}\tilde{a}_{i_t}\frac{c_{\ord(f_t)}}{(\ord(f_t))^{1/q}}\leq \max_{n\geq k}c_n\norm[(\tilde{a}_{i_t})_{t\in A\setminus B}]_{\ell_p}\leq \max_{n\geq k}c_n\,.
$$
We consider $h(y)=\sum_{t\in A}\sum_{i\in I_t, i> i_t}\tilde{a}_i\sum_{s\in J_i}t(f_s)f_s(y_{i})$. Let $D=\{s\in J_i, i\in I_t, i> i_t, t\in A: \ \ord (f_s)\leq k\}$. Then 
$$
\sum_{t\in A}\sum_{i\in I_t, i> i_t}\sum_{s\in J_i\cap D}\tilde{a}_if_s(y_{i})\leq \# D/k^2\leq 1/k\,,
$$ 
and we again erase this part with error $1/k$. For any $i\in I_t, i> i_t$ for some $t\in A$ we let $r_i=\ord(f_t)m_t$ and compute, using H\"older inequality,
\begin{align*}
 \sum_{t\in A}\sum_{i\in I_t, i> i_t}\sum_{s\in J_i\setminus D}\tilde{a}_it(f_s)f_s(y_{i})\leq &\sum_{t\in A}\sum_{i\in I_t, i> i_t}\tilde{a}_i8 (\# J_i)^{1/q}\theta_{r_i}\\
 &\leq 8\max_{n\geq k}c_n\sum_{t\in A}\sum_{i\in I_t, i> i_t}\tilde{a}_i \frac{(\# J_i)^{1/q}}{r_i^{1/q}}\\
 &\leq 8\max_{n\geq k}c_n \norm[(\tilde{a}_i)_{i\in I_t,i> i_t,t\in A}]_{\ell_p}\leq 8\max_{n\geq k}c_n\,.
\end{align*}
We put all the estimates together obtaining 
$$
f(y)\leq 36(9 \max_{n\geq k}c_n+4/k)\,.
$$
Therefore we proved that $\Delta_{\e}=\sup\left\{ \norm[\sum_{i\in \N}a_iv_i]:\ \sup_{i\in \N}|a_i|\leq \e,\ \norm[(a_i)_{i\in \N}]_{\ell_p}=1\right\}$ converges to zero, as $\e\to 0$. By Lemma 2.4 \cite{s2} there are some $(\delta_n)_n\subset (0,\infty)$ with $\delta_n\searrow 0$ such that for any $(a_i)_i\in c_{00}$
$$
\norm[\sum_{i}a_iv_i]\leq \max_{n\in\N}\delta_n \max_{\# F\leq n}\norm[(a_i)_{i\in F}]_{\ell_p}\,,
$$
which ends the proof of Lemma.
\ep
We continue the proof of Theorem \ref{p-sp}. By the proof of Thm 2.9 \cite{mp}, $p$ is in the Krivine set of $Y$ in Lemberg sense \cite{l}, i.e. for any $n$ there is a normalized block sequence $(x^{(n)}_i)_i\subset Y$ generating spreading model $(u^{(n)}_i)_i$ such that $(u^{(n)})_{i=1}^n$ is 1-equivalent to the u.v.b. of $\ell_p^n$. 

Pick $(m_n)_n$ such that $\delta_{m_n}\leq 1/4^n$. Apply Prop. 3.2 \cite{aost} to constants $C_n=2^n$, $n\in\N$ and normalized block sequences $(x_i^{(m_{n})})_i$ generating spreading models $(u_i^{(m_{n})})_i$. We obtain thus a seminormalized block sequence $(x_i)$ generating spreading model $(u_i)_i$ which $C_n$ dominates $(u_i^{(m_{n})})_i$ for any $n\in\N$. By Lemma \ref{l3.5} we obtain
\begin{align*}
 \norm[\sum_{i}a_iv_i]&\leq \max_{n\in\N}\delta_n \max_{\# F\leq n}\norm[(a_i)_{i\in F}]_{\ell_p}\\
&\leq \max_{n\in\N}\delta_{m_{n}} \max_{\# F\leq m_{n+1}}\norm[(a_i)_{i\in F}]_{\ell_p}\\
&\leq \max_{n\in\N}1/4^n \max_{\# F\leq m_{n+1}}\norm[\sum_{i\in F}a_iu_i^{(m_{n+1})}]\\
&\leq \max_{n\in\N}C_{n+1}/4^n \max_{\# F\leq m_{n+1}}\norm[\sum_{i\in F}a_iu_i]\\
&\leq \max_{n\in\N}2/2^n \max_{\# F\leq m_{n+1}}\norm[\sum_{i\in F}a_iu_i]\,.
\end{align*}
Notice that $(u_i)$ is not equivalent to $c_0$, thus by Theorem \ref{s2} we finish the proof. 
\ep
In \cite{gas2} the construction of non-compact strictly singular operators was based on $c_0$-spreading model of higher order in the dual space. However this method does not follow straightforward in case of $p-$spaces, as the observation below shows. We consider the Schlumprecht space $S=T[(\mc{A}_n,\frac{1}{\log_2(n+1)})_n]$ introduced in \cite{s1}. In \cite{kl} it was shown that $S$ contains a block sequence generating $\ell_1$-spreading model. 
\bpr \label{c-0} Consider the sequence $(y_k)$ generating $\ell_1$-spreading model constructed in \cite{kl}, $y_k=\sum_{m=1}^kv_{k,m}$, $k\in\N$. Take any block sequence $(y_k^*)\subset S^*$ so that $y^*_k(y_l)=\de_{l,k}$. Then the sequence $(y_k^*)$ does not generate $c_0$-spreading model.
\epr
\bp We can assume that $\supp y_k^*=\supp y_k$, $k\in\N$. Consider two cases:

\textsc{CASE 1}. There is $m_0\in\N$, $\de>0$ and an infinite $K\subset\N$ with $\abs[y^*_k(\sum_{m=1}^{m_0}v_{k,m})]\geq \de$ for any $k\in K$.

Let $z^*_k$ be the restriction of $y^*_k$ to the support of $\sum_{m=1}^{m_0}v_{k,m}$, $k\in K$. Then $(z^*_k)_{k\in K}$ is a seminormalized block sequence in $S^*$, majorized by $(y_k^*)_{k\in K}$. Since by the form of $(v_{m,k})$ the length of $\supp (\sum_{m=1}^{m_0}v_{k,m})$ is constant, we can pick some subsequence $(z^*_k)_{k\in L}$ of $(z_k^*)_{k\in K}$ consisting of, up to controllable error, equally distributed vectors. As the u.v.b. in $S$ is subsymmetric, the same holds for $(z^*_k)_{k\in L}$, thus $(z^*_k)_{k\in L}$ is equivalent to spreading model generated by itself. It follows that $(y^*_k)$ cannot generate $c_0$-spreading model.

\textsc{CASE 2}. If the first case does not hold, pick increasing $(N_j)\subset\N$ so that
$$
\left|y^*_{N_j}\left(\sum_{m=1}^{N_{j-1}}v_{N_j,m}\right)\right|\leq 1/2^j\,.
$$
Consider the norm of vectors $z_j^* = y_{N_1}^* + \dots+ y_{N_j}^*$. Put
$$
x_{N_1}= y_{N_1},\ \ \ x_{N_j}=\sum_{m=N_{j-1}+1}^{N_j}v_{N_j,m},\ \ \ j>1\,.
$$
By the choice of $(N_j)$ we have $y^*_{N_j}(x_{N_j})\geq 1 - 1/2^j$. 

We estimate the norm of $x_j = x_{N_1} + \dots + x_{N_j}$. We can assume at the beginning that $(N_j)$ was chosen to increase fast enough so that  $(x_{N_j})$ is $D$-equivalent to the unit basis of $S$ (see Remark 5, Lemma 2 \cite{kl}). Therefore $\nrm{x_j}\leq Dj/f(j)$.

By the choice of $(N_j)$ and definition of $x_{N_j}$ we have $z_j^*(x_j) \geq j-1$. Hence 
$$
\nrm{z_j^*}\geq z_j^*(x_j)/\nrm{x_j}\geq f(j)(j-1)/Dj \geq f(j)/2D\,.
$$
Notice that the same scheme works if we replace $N_1,\dots,N_j$ by any $N_{n_1},\dots,N_{n_j}$ in definition of $z_j$, hence no subsequence of $(y_k^*)$ can produce a $c_0$-spreading model.
\ep
\subsection{Spaces defined by families $(\mc{S}_n)_n$}
Regarding the existence of strictly singular operators from subspaces of mixed  Tsirelson spaces we prove the following result, which is in  ''localization''  of Schlum\-precht result in mixed Tsirelson spaces. First recall the definition of a higher order $\ell_1$-spreading models. 

\bd We say that a normalized basic sequence $\xn$ in a Banach space generates an $C-\ell_1^\al$-spreading model, $\al<\omega_1$, $C\geq 1$, if for any $F\in\mc{S}_\al$ the sequence $(x_n)_{n\in F}$ is $C-$equivalent to the u.v.b. of $\ell_1^{\# F}$. In case of $\al=1$ we obtain the classical $\ell_1$-spreading model.
\ed
We recall that $[M]$, $M\subset\N$, denotes the family of all infnite subsequences of $M$, $[M]^<$ - the family of all finite subsequences of $M$. 
\begin{theorem}\label{thm3.7}
 Let $X=T[(\mathcal{S}_{n},\theta_{n})_{n}]$  or $T_M[(\mathcal{S}_{n},\theta_{n})_{n}]$ be a regular (modified) mixed Tsirelson space. If $X$ contains a  block sequence $(y_n)$ generating $\ell_{1}^{\omega}$-spreading model then there are a subspace $Y\subset [(y_n)]$ and  a strictly singular operator   $T:Y\to X$.
\end{theorem}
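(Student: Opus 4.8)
\emph{Strategy.}
The plan is to construct the operator directly on a block subspace $Y$ of $[(y_n)]$, spanned by a block sequence $(x_k)$ of ``low--complexity'' Schreier averages of $(y_n)$, by sending each $x_k$ to a ``high--complexity'' special average $z_k$ of the same material; the $\ell_1^\omega$-spreading model is exactly what keeps $Y$ rigid at every Schreier level while letting the images collapse at high levels. Two preliminary remarks. First, in $X$ (whether modified or not) every seminormalised block sequence generates an $\ell_1$-spreading model (a ``localised'' functional $\theta_1\sum_k g_k$, with $g_k$ norming the $k$-th vector on its own range, is norming), so the spreading--model criterion Theorem~\ref{s2} cannot be invoked here as it was for Theorem~\ref{p-sp}(2), and the operator has to be analysed by hand. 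Second, the existence of an $\ell_1^\omega$-spreading model is known to force $\theta=\lim_n\theta_n^{1/n}=1$, so $(\clubsuit)$ holds automatically and the estimates of \S2 (in their versions for $X$ modified or not) are available.

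\emph{Construction.}
After passing to a subsequence and renorming, assume $(y_n)$ is normalised with $\nrm{\sum_{i\in F}a_iy_i}\geq C^{-1}\sum_{i\in F}|a_i|$ whenever $F\in\mc{S}_\omega$ and $\min F$ is large. Fix $m\in\N$, a rapidly increasing $(n_k)\subset\N$ and a fast null $(\e_k)$. Using the averaging--tree construction (Definition~\ref{aat0}, Remark~\ref{aat1}) and Proposition~\ref{theta2.5}, build inside $[(y_n)]$ two interleaved block sequences of averages of $(y_n)$: normalised $(m,\e_k)$-averages $x_k$, and --- over the same blocks of $y_n$'s --- normalised special (or Tsirelson) $(n_k,\e_k)$-averages $z_k$ forming a RIS, with $\ran x_k$ and $\ran z_k$ running in parallel. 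By the $\ell_1^\omega$-spreading model property every unnormalised average occurring here has norm in $[C^{-1},1]$, so the $x_k,z_k$ are honestly seminormalised; moreover $(x_k)$ again generates a $C$-$\ell_1^\omega$-spreading model --- a block of $x_k$'s indexed by an $\mc{S}_\omega$-set unfolds to a block of $y_i$'s whose index set, being a union over an $\mc{S}_\omega$-set of $\mc{S}_m$-sets, still lies in $\mc{S}_\omega$ once the supports are far enough out --- whereas $(z_k)$, the $n_k$ being large, does not. Put $Y=[(x_k)]\subseteq[(y_n)]$ and $Tx_k=z_k$.

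\emph{Boundedness.}
The core estimate is $\nrm{\sum_k a_kz_k}\leq D\nrm{\sum_k a_kx_k}$ for all scalars, with a universal $D$; this makes $T$ extend to a bounded operator. Given a norming functional $f$ of $X$ with a tree--analysis realising $\nrm{\sum_k a_kz_k}$, apply the RIS--reductions of Facts~\ref{f3} and~\ref{ave0} relative to the ranges of the $z_k$, so that, up to a small additive error, each node meeting a given $z_k$ has order in a narrow window around $n_k$. Since $z_k$ is a high--complexity Schreier average of precisely the block of $(y_n)$ of which $x_k$ is a low--complexity average, every admissible (resp.\ allowable) way the surviving part of $f$ can penetrate $z_k$ is matched by a legitimate and no more expensive way of penetrating $x_k$; feeding in the passage between the $y_i$'s and their averages provided by the $\ell_1^\omega$-spreading model, exactly as in the ``Tsirelson--type'' upper estimates (Lemmas~\ref{theta1.7} and~\ref{ave1}), one rewrites $f$ as a norming functional $g$ with $f(\sum_k a_kz_k)\leq D\,g(\sum_k a_kx_k)$. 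Bookkeeping the weights, orders and the interplay of the two complexity scales through this rewriting is where I expect the real difficulty of the argument to lie.

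\emph{Strict singularity and non--compactness.}
Let $Z$ be a block subspace of $Y$, spanned by a normalised block sequence $(w_j)$ of $(x_k)$. If $\inf_j\nrm{Tw_j}=0$, some $w_j$ is already a normalised vector with arbitrarily small image, so assume $\nrm{Tw_j}\geq\de>0$ for all $j$. Then the $x_k$-support of each $w_j$ has bounded Schreier complexity (otherwise $Tw_j$ is a high--complexity average over a high--complexity index set, whose norm tends to $0$ by the RIS estimate, contradicting $\nrm{Tw_j}\ge\de$), so for each $n$ a far--out $(n,\e)$-average $v=L^{-1}\sum_{j\in F}w_j$ of $(w_j)$ is supported, in the $x_k$-indexing, on an $\mc{S}_\omega$-set; hence $\nrm{v}\geq C^{-1}$ by the inherited $\ell_1^\omega$-spreading model of $(x_k)$, while $Tv=L^{-1}\sum_{j\in F}Tw_j$ is, after a small error and further RIS--reductions, an $(n,\e)$-average of a RIS of normalised special/Tsirelson averages of complexity $\geq n+3$, so $\nrm{Tv}\leq D'\theta_n$ by Lemma~\ref{ave1}. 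The normalised vector $v/\nrm v\in Z$ therefore satisfies $\nrm{T(v/\nrm v)}=\nrm{Tv}/\nrm v\leq D'C\theta_n\to 0$, so $T$ is not bounded below on $Z$; as $Z$ was arbitrary, $T$ is strictly singular. Finally, $X$ being reflexive, $(z_k)=(Tx_k)$ is seminormalised and weakly null, hence has no norm--convergent subsequence while $(x_k)$ is bounded, so $T$ is non--compact. This exhibits $Y$ as a subspace of Class~2 (\cite{s2}) on which $x_k\mapsto z_k$ is the desired strictly singular non--compact operator.
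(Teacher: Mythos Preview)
Your strategy is genuinely different from the paper's, and the central step---boundedness of $T$---is not proved. You yourself flag it: ``Bookkeeping the weights, orders and the interplay of the two complexity scales through this rewriting is where I expect the real difficulty of the argument to lie.'' That is exactly right, and nothing you have written carries that difficulty. Concretely, the construction of $x_k$ and $z_k$ ``over the same blocks of $y_n$'s'' is not well-defined: an $(m,\e_k)$-average lives on an $\mc{S}_m$-set while an $(n_k,\e_k)$-average lives on an $\mc{S}_{n_k}$-set, and the defining smallness condition on $\mc{S}_{n_k-1}$-subsets prevents the latter from being supported on an $\mc{S}_m$-set. Even under a charitable reading (parallel ranges, disjoint underlying index sets), the claimed rewriting of a tree-analysis on $\sum a_kz_k$ into one on $\sum a_kx_k$ does not follow from Lemmas~\ref{theta1.7} or~\ref{ave1}: those lemmas compare a single average against Tsirelson-type quantities, not two sequences of averages of different complexities against each other. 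The strict-singularity paragraph inherits the same problem---$w_j$ is an arbitrary linear combination, not an average, so RIS estimates do not apply to $Tw_j$ directly.

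The paper avoids all of this by a much shorter and completely different route. It introduces the index $\delta_\alpha(\vec{x})=\sup\{\delta:\text{some subsequence generates a }\delta\text{-}\ell_1^\alpha\text{-spreading model}\}$, stabilises it along a diagonal subsequence, and applies Gasparis's dichotomy to force $\mathcal{F}_{2\delta_{\alpha_n}}\cap[M_n]\subset\mc{S}_{\alpha_n}$. This yields, for a suitable $M=(m_i)$, the upper estimate
\[
\norm[\sum_i a_i e_{m_i}]\;\leq\; 8\sum_{k}\theta_{\alpha_{k-1}}\sup_{F\in\mathcal{G}_k}\sum_{i\in F}|a_i|,\qquad \mathcal{G}_k=\mc{S}_{\alpha_k}\cap\{k,k+1,\dots\},
\]
while the $\ell_1^\omega$-spreading model of $(y_n)$ gives $\norm[\sum_i a_i y_i]\geq c\sup_k\sup_{F\in\mathcal{G}_k}\sum_{i\in F}|a_i|$. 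The operator is then simply $y_i\mapsto e_{m_i}$; boundedness is immediate from these two inequalities, and strict singularity comes for free because the operator factors through the $c_0$-saturated Schreier-type space carrying the norm $\sum_k\theta_{\alpha_{k-1}}\sup_{F\in\mathcal{G}_k}\sum_{i\in F}|\cdot|$. No averaging trees, no $(\clubsuit)$, no assumption on $\theta$, and no tree-analysis bookkeeping are needed.
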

We recall that in \cite{lt} it was proved that if a regular sequence $(\theta_n)$ satisfies $\lim_m\limsup_n\frac{\theta_{m+n}}{\theta_n}>0$
then the mixed Tsirelson space $X=T[(\mc{S}_n,\theta_n)_n]$ is subsequentially minimal if and only if any block subspace of $X$ admits an $\ell_1^\omega$-spreading model, if and only if any block subspace of $X$ has Bourgain $\ell_1-$index greater than $\omega^\omega$. These conditions hold in particular if $\sup\theta_n^{1/n}=1$ \cite{m}. In \cite{klmt} analogs of these results were studied in the partly modified setting.

To prove the theorem  we first define an index measuring the best constant of the $\ell^{{\alpha}}_{1}$-spreading models generated by subsequences  of a given sequence. Let   $\vec{x}:=\xn$ be a normalized block sequence. We set
$$
\delta_{\alpha}(\vec{x})=\sup\{\delta>0: \mbox{$\exists M\in [\N]$ such that $(x_{n})_{n\in M}$ generates $\delta-\ell^{\alpha}_{1}$ spr. model} \}\,.
$$
The following properties of $\delta_{\alpha}(\vec{x})$ follows readily from the definition.
\begin{enumerate}
\item[ a)]  $\delta_{\alpha}(\xn)=\delta_{\alpha}((x_{n})_{n\geq n_{0}})$  for all $n_{0}\in\N$.
\item[b)]  $\delta ((x_{n})_{n\in M})\leq \delta_{\alpha}  (\xn)$  for all $M\in [\N]$.
\item[c)]  $(\delta_{\alpha}(\vec{x}))_{\alpha<\omega_{1}}$ is non-increasing family.
\end{enumerate}
By standard arguments we may stabilize  $\delta_{\alpha}(\vec{x})$. Namely passing to a subsequence    we may assume that $\delta_{\alpha}(\xn)=\delta_{\alpha}((x_{n})_{n\in M})$ for every $M\in 
[\N]$.

By Bourgain's  $\ell_{1}-index$ it follows that  $\delta_{\alpha}(\xn)>0$ countable many $\alpha's$, enumerate them as $(\al_n)_n$. In particular for  an asymptotic $\ell_{1}$ space it  follows that $\delta_{n}(\vec{x})>0$ for all  $n\in\N$.

Inductively  we  choose  $M_{1}\supset M_{2}\supset\dots$ infinite subsets of $\N$  such that
\begin{align*}
\delta_{\alpha_{n}}((x_{n})_{n\in M_{n}} )=\delta_{\alpha_{n}} (x_{n})_{n\in  L}\,\,\forall   L\in [M_{n}]\,.
\end{align*}
We define  the family 
$$
\mathcal{F}_{2\delta_{\alpha_{n}}(\vec{x})}=\{A\in [\N]^{<}: \mbox{$\exists x^{*}\in B_{X^{*}}$ with $x^{*}(x_{i})>2\delta_{\alpha_{n}}(\xn)$ for all $i\in A$} \}.
$$
By I. Gasparis theorem \cite{gas}  there exists $N\in[M_{n}]$ such that 
$$
\mbox{either    $\mathcal{S}_{\alpha_{n}}\cap [N]\subset \mathcal{F}_{2\delta_{\alpha_{n}} }$   or $\mathcal{F}_{2\delta_{\al_n}}\cap [N] \subset\mathcal{S}_{\alpha_{n}}$. }
$$
In the first case by 1-unconditionality of the basis it follows that  $(x_{n})_{n\in N}$ and hence $(x_{k})_{k\in M_{n}}$ contains a subsequence  which generates $2\delta_{\alpha_{n}}-\ell_{1}^{\alpha_{n}}$-spreading model,  a contradiction.  So additionally we may assume that there exists $M_{n}\in [M_{n-1}]$ with
\begin{align}\label{eq1}
\mathcal{F}_{2\delta_{\alpha_{n}}} (M_{n})\subset \mathcal{S}_{\alpha_{n}} ,
\\
\mathcal{S}_{\alpha_{n-1}}\cap \{m_{n}, m_{n}+1,\dots\}\subset\mathcal{S}_{\alpha_{n}}\,.
\label{eq1a}
\end{align}
Let $M=(m_{i})_{i} $ be a diagonal set. Passing to a subsequence we may assume that $\sum_{n}n\delta_{\alpha_{n}}<0.25$. Let $\norm[\sum_{i}a_{i}x_{m_{i}}]=1$ and let  $x^{*}\in B_{X^{*}} $ such that  $\sum_{i}a_{i}x^{*}(x_{m_i})=1$. By the unconditionality  we may assume that  $x^{*}(x_{m_i})\geq 0$ for every $i$. Let  $2\delta_{\alpha_{0}}=1$ and
$$
F_{k}=\{i: x^{*}(x_{m_{i}})\in
(2\delta_{\alpha_{k}},2\delta_{\alpha_{k-1}}] \} 
$$
and $F_{k}^{1}=F_{k}\cap\{1,\dots,k-1\}$,
$F_{k}^{2}=F_{k}\cap\{k,k+1,\dots\}$.

From  \eqref{eq1},\eqref{eq1a}  we get $F_{k}^{2}\in\mathcal{S}_{\alpha_{k}}\cap \{k,k+1,\dots\}=\mathcal{G}_{k}$.    It follows
\begin{align*}
\norm[\sum_{i}a_{i}x_{m_{i}}]&=\sum_{i}a_{i}x^{*}(x_{m_{i}})=\sum_{k=1}^{\infty}\sum_{i\in  F_{k}}a_{i}x^{*}(x_{m_{i}})
\\
& = \sum_{k=1}^{\infty}\left(\sum_{i\in F_{k}^{1}}a_{i}x^{*}(x_{m_{i}})+\sum_{i\in F_{k}^{2}}a_{i}x^{*}(x_{m_{i}})\right)
\\
&\leq
\sum_{k=2}^{\infty}2\delta_{\alpha_{k-1}}(k-1)\max_{i}\abs[a_{i}]+ 
\sum_{k=1}^{\infty}2\delta_{\alpha_{k-1}}\sum_{i\in F_{k}^{2}}\abs[a_{i}]
\\
&\leq 0.5\norm[\sum_{i}a_{i}x_{m_{i}}]+ \sum_{k=1}^\infty2\delta_{\alpha_{k-1}}\sup_{F\in\mathcal{G}_{k}}\sum_{i\in F}\abs[a_{i}]\,,
\end{align*}
and therefore
$\norm[\sum_{i}a_{i}x_{m_{i}}]\leq 4 \sum_{k=1}^\infty\delta_{\alpha_{k-1}}\sup_{F\in\mathcal{G}_{k}}\sum_{i\in F}\abs[a_{i}]. $

So we have the following
\begin{equation} \label{eq3}
 \norm[\sum_{i}a_{i}x_{m_{i}}]\leq 4 \sum_{k=1}^\infty\delta_{\alpha_{k-1}}\sup_{F\in\mathcal{G}_{k}}\sum_{i\in F}\abs[a_{i}]\,\, \textrm{for all}\,\, (a_{i})_{i},
\end{equation}
where $\mathcal{G}_{k}=\mathcal{S}_{\alpha_{k}}\cap\{k,k+1,\dots\}$.
\begin{proof}[Proof of the Theorem \ref{thm3.7}]
Let $\vec{e}=\xn[e]$  be the basis  of $X$.  Using that for every $j\in\N$ and every  $\sum_{i\in F}a_{i}e_{i}$ special convex combination of the basis it holds 
$$
\theta_{n}\leq \norm[\sum_{i\in F}a_{i}e_{i}]\leq 2\theta_{n}\,,
$$
see \cite{ad2,adkm}. It follows readily that  $\delta_{n}(\vec{e})\in [\theta_{n},2\theta_{n}]$ and $\delta_{\omega}=0$.

Since the space $X$ contains  a a block sequence $\xn[y]$ generating $\ell_{1}^{\omega}$-spreading model  it follows that
$$
\norm[\sum_{i}a_{i}y_{i}]\geq c\sum_{i\in F}\abs[a_{i}]\,\,\,\forall n\in\N, F\in\mathcal{S}_{n}\cap\{n,n+1,\dots\}. 
$$
By the previous reasoning we pick a $M=(m_{i})\in [\N]$ and a  sequence 
$\alpha_{k}\nearrow \omega$ such
that $\sum_k k\delta_{\al_k}<\infty$ and \eqref{eq3} holds. Setting $M=\sum_{k}\theta_{\alpha_{k-1}}$  we have
\begin{align*}
\norm[\sum_{i}a_{i}e_{m_i}] &\leq 
8\sum_{k} \theta_{\alpha_{k-1} } \sup_{F\in\mathcal {G}_{k}}\sum_{i\in
F}\vert a_i\vert   
\\
&\leq 
\frac{8M}{c}\sup_{k}c\sup_{F\in \mathcal{G}_{k}}\sum_{i\in F}\vert
a_{i}\vert 
\\
&\leq  \frac{8M}{c}\norm[\sum_{i}a_{i}y_{i}]\,.
\end{align*}
It follows that  the operator  extending the mapping $y_{n}\to x_{m_{n}}$  factors through a $c_0$-saturated space and hence is strictly singular.
\end{proof}
\subsection{Remarks and questions}
As a corollary to  Theorem \ref{p-sp}, part (1), we obtain that the (non-modified) Tzafriri space $Y$ has an asymptotic $\ell_2$ subspace $Z$ which satisfies a blocking principle in the sense of \cite{cjt}. The only known spaces with a blocking principle so far were similar to $T$, $T^*$ and their variations. The two major ingredients used in \cite{cjt} for proving the minimality of $T^*$ are the blocking principle and the saturation with $\ell_\infty^n$'s. It is shown in \cite{jko} that Tzafriri space $Y$ contains uniformly $\ell_\infty^n$'s. It is not known whether $Y$ is uniformly saturated with $\ell_\infty^n$'s. In the opposite direction, we do not know if $Z$ contains a convexified Tsirelson space $T^{(2)}$ (which is equivalent to its modified version).

In 1977 Altshuler \cite{alt} (cf. e.g. \cite{lt1}) constructed a Banach space with a symmetric basis which contains no $\ell_p$ or $c_0$, and all its symmetric basic sequences are equivalent. In 1981 C. Read \cite{read} constructed a space with, up to equivalence, precisely two symmetric bases. More precisely, Read proved that any symmetric basic sequence in his space CR is equivalent either to the u.v.b. of $\ell_1$ or to one of the two symmetric bases of CR. 
A careful look at the papers of Altshuler and Read shows that their proofs work similarly for the more general case of all subsymmetric basic sequences. This observation leads to the following questions:

\textbf{Question 1.} Does there exist a space in which all subsymmetric basic sequences are equivalent to one basis, and that basis is not symmetric?

We remark that Altshuler's space has a natural subsymmetric version but we do not know if it satisfies the above property.

\textbf{Question 2.} Does there exist a space with exactly two subsymmetric bases, which are not symmetric?

\end{document}